\newcommand{\R}{\mathbb{R}}
\newcommand{\Q}{\mathbb{Q}}
\newcommand{\N}{\mathbb{N}}
\newcommand{\W}{\mathbb{W}}
\newcommand{\w}{\mathbf{w}}
\renewcommand{\P}{\mathbb{P}}
\newcommand{\I}{\mathbb{I}}
\newcommand{\cM}{\mathcal{M}}
\newcommand{\cL}{\mathcal{L}}
\newcommand{\cR}{\mathcal{R}}
\newcommand{\cF}{\mathcal{F}}
\newcommand{\cT}{\mathcal{T}}
\newcommand{\cQ}{\mathcal{Q}}
\newcommand{\cP}{\mathcal{P}}
\newcommand{\cS}{\mathcal{S}}
\newcommand{\Z}{\mathbb{Z}}
\newcommand{\Lip}{\operatorname{Lip}}
\newcommand{\spt}{\operatorname{spt}}
\newcommand{\Tan}{\operatorname{Tan}}
\newcommand{\ZOOM}{\operatorname{ZOOM}}
\newcommand{\micro}{\operatorname{micro}}
\renewcommand{\emptyset}{\varnothing}
\renewcommand{\epsilon}{\varepsilon}
\renewcommand{\rho}{\varrho}
\renewcommand{\phi}{\varphi}
\newcommand{\meanbar}[1]{%
\setbox0 = \hbox{$#1 \int$}
\hbox to 0pt{%
\thinspace
\hskip 0.1\wd0
\raise 0.5\ht0
\hbox{%
\lower 0.5\dp0
\hbox{\rule{0.8\wd0}{2\linethickness}}
}%
\hss
}%
}
\theoremstyle{plain}
\newtheorem{theorem}{Theorem}
\newtheorem{lemma}{Lemma}
\newtheorem{proposition}{Proposition}
\newtheorem{corollary}{Corollary}
\newtheorem{problem}{Problem}
\newtheorem*{claim*}{Claim}
\theoremstyle{definition}
\newtheorem{definition}{Definition}
\newtheorem*{examples*}{Examples}
\newtheorem*{example*}{Example}
\newtheorem{remark}{Remark}
\newtheorem{notations}{Notations}
\newtheorem*{notations*}{Notations}
\newtheorem*{notation*}{Notation}
\numberwithin{equation}{section}
\numberwithin{theorem}{section}
\numberwithin{thm}{section}
\numberwithin{lemma}{section}
\numberwithin{proposition}{section}
\numberwithin{cor}{section}
\numberwithin{corollary}{section}
\numberwithin{claim}{section}
\numberwithin{definition}{section}
\numberwithin{example}{section}
\numberwithin{remark}{section}
\numberwithin{notations}{section}
\numberwithin{notation}{section}
\numberwithin{claim}{section}
\numberwithin{problem}{section}
\numberwithin{figure}{section}
\author{Tuomas Sahlsten}
\title{Tangent measures of typical measures}
\thanks{Research was supported by the Finnish Centre of Excellence in Analysis and Dynamics Research and Emil Aaltonen Foundation.}
\address{Department of Mathematics and Statistics, P.O.B. 68, FI-00014 University of Helsinki, Finland}
\email{tuomas.sahlsten@helsinki.fi}
\subjclass[2010]{28A12 (Primary); 28A75, 28A80 (Secondary)}
\begin{document}

\begin{abstract}
We prove that for a typical Radon measure $\mu$ in $\R^d$, every non-zero Radon measure is a tangent measure of $\mu$ at $\mu$ almost every point. This was already shown by T. O'Neil in his PhD thesis from 1994, but we provide a different self-contained proof for this fact. Moreover, we show that this result is sharp: for any non-zero measure we construct a point in its support where the set of tangent measures does not contain all non-zero measures. We also study a concept similar to tangent measures on trees, micromeasures, and show an analogous typical property for them.
\end{abstract}

\maketitle

\section{Introduction}
\label{intro}


If $X$ is a complete metric space, then we say that a subset of $X$ is \textit{meagre}, if it is a countable union of sets whose closure in $X$ has empty interior. A subset of $X$ is \textit{residual} if its complement is meagre. A property $\sf{P}$ of points $x \in X$ is satisfied for \textit{typical} $x \in X$ if the set
$$\{x \in X : x \text{ satisfies } \sf{P}\}$$
is residual. Recently, typical properties of measures have gained a lot of attention. For example, in the recent papers \cite{Bay12,BS10,Ols05,Ols07,Ols08} the $L^q$-dimensions and multifractal properties of typical measures were studied. This motivated us to study the tangential properties of typical measures. Our work is somewhat related to the papers by Buczolich and R\'ati \cite{Buc03, BR06} where the structure of the tangent sets of the graphs of typical continuous functions were studied.

In \cite{ONe95} O'Neil constructed a Radon measure $\mu$ in $\R^d$ with a very surprising property: for $\mu$ almost every $x \in \R^d$ the set of tangent measures $\Tan(\mu,x) = \cM \setminus \{0\}$, where $\cM$ is the space of all Radon measures. In his PhD thesis \cite{ONe94} O'Neil also extended this result by showing that such a property of measures is actually typical:

\begin{theorem}
\label{main1}
A typical $\mu \in \cM$ satisfies $\Tan(\mu,x) = \cM \setminus \{0\}$ at $\mu$ almost every $x \in \R^d$.
\end{theorem}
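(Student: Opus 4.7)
The plan is a Baire category argument. Equip $\cM$ with a complete separable metric $d$ that metrizes weak-$*$ convergence against $C_c(\R^d)$, and fix a countable family $\{\nu_k\}_{k\in\N}$ dense in $\cM\setminus\{0\}$; since $\nu+\eps\,\delta_0\to\nu$ weakly as $\eps\to 0$, one can arrange $0\in\spt\nu_k$ for every $k$. A standard diagonal argument shows that $\Tan(\mu,x)$ is weak-$*$ closed, so the identity $\Tan(\mu,x)=\cM\setminus\{0\}$ is equivalent to $\nu_k\in\Tan(\mu,x)$ for all $k$. It therefore suffices to prove that for each $k$ the set
$$G_k:=\{\mu\in\cM:\nu_k\in\Tan(\mu,x)\text{ for }\mu\text{-a.e.}\ x\}$$
is residual. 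Introducing
$$A_{k,n,m}(\mu):=\{x\in\R^d:\exists\,r\in(0,1/m),\,c>0\text{ with }d(c\,T_{x,r}\mu,\nu_k)<1/n\},$$
which is open in $x$, one has $\nu_k\in\Tan(\mu,x)$ iff $x\in A_{k,n,m}(\mu)$ for every $n,m$, so
$$G_k=\bigcap_{n,m,N,L\in\N}W_{k,n,m,N,L},\qquad W_{k,n,m,N,L}:=\{\mu:\mu(B(0,L)\setminus A_{k,n,m}(\mu))<1/N\}.$$
Openness of $W_{k,n,m,N,L}$ follows by choosing, for $\mu\in W$, a compact $K\subseteq A_{k,n,m}(\mu)\cap B(0,L)$ with $\mu(B(0,L)\setminus K)<1/N$; the defining condition of $A$ is open in the pair $(\mu,x)$, so by compactness $K\subseteq A_{k,n,m}(\mu')$ for every $\mu'$ in a weak-$*$ neighborhood of $\mu$, and the portmanteau theorem (applied with $L$ and $K$ generically chosen to have $\mu$-null boundary) yields continuity of $\mu'\mapsto\mu'(B(0,L)\setminus K)$ at $\mu$. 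It remains to prove each $W_{k,n,m,N,L}$ dense in $\cM$.

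For density, given $\mu\in\cM$ with $\mu(B(0,L))>0$ (else $\mu\in W$ already) and $\eps>0$, I construct $\mu'\in W_{k,n,m,N,L}$ with $d(\mu,\mu')<\eps$ by a multi-scale replacement. Partition $B(0,L)$ into Borel pieces $Q_i$ of diameter less than $\eps_1\ll 1/m$, and in each $Q_i$ with $\mu(Q_i)>0$ pick a center $x_i\in Q_i$ and a scale $r_i$ small enough that the finitely many test-function supports metrizing $d$ at tolerance $1/n$, when scaled by $r_i$ around $x_i$, fit inside $Q_i$. Writing $S_{x,r}(y):=x+ry$, set
$$\mu'|_{Q_i}:=\frac{\mu(Q_i)}{\nu_k(B(0,1))}\,(S_{x_i,r_i})_*\bigl(\nu_k|_{B(0,1)}\bigr).$$
A direct computation gives $c\,T_{x_i,r_i}\mu'=\nu_k$ on $B(0,1)$ for $c=\nu_k(B(0,1))/\mu(Q_i)$, so $x_i\in A_{k,n,m}(\mu')$. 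By weak-$*$ continuity of translation and the hypothesis $0\in\spt\nu_k$, there exists $\tau>0$ depending only on $\nu_k$ and $n$ such that $d(c\,T_{y,r_i}\mu',\nu_k)<1/n$ for every $y\in B(x_i,\tau r_i)$; hence the entire ball $B(x_i,\tau r_i)$ lies in $A_{k,n,m}(\mu')$ and carries $\mu'$-mass at least $\alpha\mu(Q_i)$, where $\alpha:=\nu_k(B(0,\tau))/\nu_k(B(0,1))>0$.

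This first level alone yields $\mu'$-mass only $\alpha\mu(B(0,L))$ on the good set, which is insufficient when $\alpha$ is small. I therefore iterate inside each ``bad'' annulus $B(x_i,r_i)\setminus B(x_i,\tau r_i)$, sub-partitioning it and repeating the same replacement at a much smaller scale $r_i'\ll r_i$; because $r_i'$ is far below the resolution of the test functions controlling $d$ at tolerance $1/n$ around the previously chosen centers, these sub-scale mass-preserving modifications do not disturb the already-established blow-ups at $x_i$. After $J$ iterations the bad mass has decayed as $(1-\alpha)^J\mu(B(0,L))$, which falls below $1/N$ once $J$ is large, while $d(\mu,\mu')<\eps$ is secured by taking $\eps_1$ and all subsequent scales small enough. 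The main obstacle is precisely this self-consistency of the iteration---ensuring that every level-$j$ center remains in $A_{k,n,m}(\mu')$ after all later refinements---and is handled by forcing the scale hierarchy $r_1\gg r_1'\gg r_1''\gg\cdots$ to shrink faster than the finitely many test-function resolutions metrizing $d$ at tolerance $1/n$. Once each $W_{k,n,m,N,L}$ is dense, Baire's theorem yields that each $G_k$, and hence $\bigcap_kG_k$, is residual, which is the theorem.
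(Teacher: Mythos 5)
Your approach is a genuinely different route from the paper's. The paper does \emph{not} try to exhibit the set $\{\mu : \Tan(\mu,x)=\cM\setminus\{0\}\ \mu\text{-a.e.}\}$ directly as a countable intersection of open sets; instead it constructs a more rigid $G_\delta$ set $\cR=\bigcap_{\nu,a,n}\cR_{\nu,a,n}$, where $\cR_{\nu,a,n}$ demands that at some generation $k\geq n$ \emph{every} $3^a$-adic cube $Q\subset\I_a$ has a blow-up at its centre $F_{a+1}$-close to a weighted duplication $\nu_a^\w$. Density of $\cR_{\nu,a,n}$ is then a single-scale replacement (Lemma~\ref{opendense}), while the $\mu$-a.e.\ coverage is extracted afterwards by a quantitative Borel--Cantelli argument (Lemma~\ref{borelcantelli} and the computation of $\P(A_a^b)$). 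You instead encode the ``$\mu$-a.e.'' condition directly into the open sets $W_{k,n,m,N,L}$ and pay for it in the density step, which becomes a finite multi-scale iteration: each replacement only gives a fixed fraction $\alpha$ of good mass, so the bad set must be re-processed $J\approx\log(1/N)/\log(1/(1-\alpha))$ times. Your approach avoids the weighted measures $\nu_a^\w$ (whose role in the paper is to make a \emph{single} replacement exact, so that density is trivial) and avoids Borel--Cantelli; in exchange you must manage a cascade of scales and verify that later refinements do not destroy earlier blow-ups. Both are legitimate; the paper's structure shifts the combinatorial difficulty from density to a.e.\ coverage, yours does the opposite.

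There is one concrete gap in your density step. You replace $\mu|_{Q_i}$ by $\frac{\mu(Q_i)}{\nu_k(B(0,1))}(S_{x_i,r_i})_*(\nu_k|_{B(0,1)})$ and assert that $c\,T_{x_i,r_i\sharp}\mu'=\nu_k$ \emph{on $B(0,1)$} gives $x_i\in A_{k,n,m}(\mu')$. That last inference is false as stated: controlling $d$ at tolerance $1/n$ requires controlling $F_a$ for all $a\leq a_0$ with $2^{-a_0}<1/n$, and the test functions in $\cL(a)$ are supported on $\I_a$, which for $a\geq1$ is \emph{much} larger than $B(0,1)$. Your $c\,T_{x_i,r_i\sharp}\mu'$ vanishes on $\I_{a_0}\setminus B(0,1)$, whereas $\nu_k$ may carry substantial mass there, so $F_a(c\,T_{x_i,r_i\sharp}\mu',\nu_k)$ can be of order $\nu_k(\I_{a_0}\setminus B(0,1))$, not $<1/n$. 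The same defect propagates to the claimed $\tau$-ball of good points. The fix is routine but must be made: restrict $\nu_k$ to a set large enough to dominate the relevant test-function supports, e.g.\ $\nu_k|_{\I_{a_0+1}}$, normalise by $\nu_k(\I_{a_0+1})$, scale by an $r_i$ small enough that $S_{x_i,r_i}(\I_{a_0+1})\subset Q_i$, and redefine $\alpha=\nu_k(B(0,\tau))/\nu_k(\I_{a_0+1})$ (still positive since $0\in\spt\nu_k$). You should also reserve slack: aim for $d(c\,T_{x_i,r_i\sharp}\mu^{(1)},\nu_k)<1/(2n)$, say, so that the translation error of size $O(\tau)$ and the perturbations from all later levels, which are bounded by $\sum_{j'>j}\eps_{j'}\eta^{j'-j}(1-\alpha)^{j'-j}\nu_k(\I_{a_0+1})$ once the scale ratio $r^{(j')}/r^{(j'-1)}\leq\eta$ is fixed small enough, can be absorbed. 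Finally, in the openness step it is cleaner to enlarge $K$ to an open $K'$ with $\overline{K'}\subset A_{k,n,m}(\mu)\cap B(0,L)$ and use upper semicontinuity of $\mu'\mapsto\mu'(B(0,L)\setminus K')$ on the closed set $B(0,L)\setminus K'$, rather than invoking portmanteau with a genericity argument for $L\in\N$.
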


In this paper, we provide a different self-contained proof for Theorem \ref{main1}. O'Neil's original proof relied on a special property of the measure $\mu$ constructed in \cite{ONe95}, but here we do not require O'Neil's measure in our approach.

As a direct consequence of Theorem \ref{main1} we notice that a typical measure $\mu$ is \textit{non-doubling} in $\R^d$, that is, the pointwise doubling condition fails $\mu$ almost everywhere, see Section \ref{application} for details. We also study the sharpness of Theorem \ref{main1}, that is, whether the property $\Tan(\mu,x) = \cM \setminus \{0\}$ can be extended to hold at \textit{every} point $x \in \spt \mu$ for a typical $\mu$. However, such an extension is not possible since for any given $\mu \in \cM$ with non-empty support $\spt \mu$ we find a point $x \in \spt \mu$ such that $\Tan(\mu,x) \neq \cM \setminus \{0\}$; see Section \ref{sharpness}.

Furthermore, we also take a quick look at a similar concept to tangent measures, the so called micromeasures, which provide a symbolic way to define ``tangent measures'' of a measure in a tree. We consider the set of all Borel probability measures $\cP$ on the tree $I^\N$, where $I$ is some finite set, and prove an analogous result for micromeasures that we had for tangent measures: for a typical $\mu \in \cP$ the set of micromeasures $\micro(\mu,x) = \cP$ at \textit{every} point $x \in I^\N$, see Section \ref{secmicro} for details. Finally, in Section \ref{problems} we exhibit some questions analogous to Theorem \ref{main1} about the micromeasure distributions of typical measures and the tangent measures of measures that are generic in the sense of prevalence instead of typicality.

\begin{remark}
The main result Theorem \ref{main1} was initially proved independently without any knowledge of the existence of O'Neil's proof in his PhD thesis \cite{ONe94} from 1994, as the same result there was not published in a journal. This was only later brought to the author's attention by O'Neil after the manuscript was submitted to arXiv article repository on 19th of March 2012 (\texttt{http://arxiv.org/abs/1203.4221v1}).
\end{remark}

\section{Preliminaries}
\label{pre}

Throughout this paper, we keep the dimension $d\in\N$ of the ambient space $\R^d$ fixed. A \textit{measure} is a Radon-measure on $\R^d$, and their collection is denoted by $\cM$. We equip $\cM$ with the weak topology that is characterized by the convergence:  if $\mu_i,\mu \in \cM$ we say that $\mu_i \to \mu$, as $i \to \infty$, if
$$\int \phi \, d\mu_i \longrightarrow \int \phi \, d\mu, \quad \text{as } i \to \infty,$$
for every compactly supported and continuous $\phi : \R^d \to \R$. In metric spaces, the \textit{open-} and \textit{closed balls} of center $x$ and radius $r$ are denoted by $U(x,r)$ and $B(x,r)$. When $\mu \in \cM$, the \textit{support} of $\mu$ is the set $\spt \mu = \{x \in \R^d : \mu(B(x,r)) > 0 \text{ for any } r > 0\}$. When $x \in \R^d$ and $r > 0$, let $T_{x,r} : \R^d \to \R^d$ be the affine homothety that maps $B(x,r)$ onto $B(0,1)$, that is, $T_{x,r}(y) = (y-x)/r$, $y \in \R^d$. Given $\mu \in \cM$, we write
$$T_{x,r \sharp} \mu(A) = \mu(rA+x), \quad A \subset \R^d,$$
that is, the push-forward of $\mu$ under the map $T_{x,r}$. When $c > 0$ we also write 
$$\cT_{x,r,c}(\mu) = cT_{x,r \sharp} \mu,$$
which induces a map $\cT_{x,r,c} : \cM \to \cM$. In the case $r = 1$, we just have $T_{x,1 \sharp} \mu =: \mu-x$. The following notion was introduced by D. Preiss in \cite{Pre87}:

\begin{definition}[Tangent measures]
\label{tangent}
A measure $\nu \in \cM \setminus \{0\}$ is a \textit{tangent measure} of $\mu \in \cM$ at $x \in \R^d$ if there exist $r_i \searrow 0$ and $c_i > 0$ such that
$$\cT_{x,r_i,c_i}(\mu) =c_i T_{x,r_i \sharp} \mu \longrightarrow \nu, \quad \text{ as } i \to \infty.$$
The set of all tangent measures of $\mu$ at $x$ is denoted by $\Tan(\mu,x)$, which is a closed subset of $\cM \setminus \{0\}$.
\end{definition}

Next we introduce some key notations for the proof of Theorem \ref{main1}:

\begin{notations}[Cube filtrations and weighted cubes] 
\label{cubeweight}
Fix $a \in \Z$.
\begin{itemize}
\item[(1)] Write
$$\I_a := [-3^{a}/2,3^{a}/2)^d$$
and for notational simplicity let $\I := \I_0$. Moreover, if $\epsilon > 0$ is fixed we let the $\epsilon$-\textit{expansions} and $\epsilon$-\textit{contractions} of the cube $\I_a$ to be the sets
$$\I^+_{a,\epsilon} =  [-3^a/2-\epsilon,3^a/2+\epsilon)^d\quad \text{and} \quad \I^-_{a,\epsilon} = [-3^a/2+\epsilon,3^a/2-\epsilon)^d.$$
\item[(2)] Suppose $a > 0$. Fix $k \in \Z$. Let $\cQ^k_a$ be the collection of all $3^a$-\textit{adic cubes} $Q$ of side-length $\ell(Q) = 3^{-ak}$ such that the unit cube $\I \in \cQ^0_a$, where $k$ is the \textit{generation} of the cubes in $\cQ^k_a$. Write $\cQ_a = \bigcup_{k \in \Z} \cQ^k_a$. If $Q \in \cQ_a^k$, we let $x(Q)$ be the \textit{central point} of $Q$. Moreover, let $Q_c$ be the \textit{central cube} amongst all the cubes $Q' \subset Q$, $Q' \in \cQ^{k+2}_a$, that is, $Q_c \in \cQ^{k+2}_a$ is uniquely determined by the requirement $x(Q_c) = x(Q)$. Notice that the central cube $Q_c$ is two generations younger than $Q$.
\item[(3)] If $Q \in \cQ^k_a$, let $Q^j \in \cQ_a^k$, $j = 2,\dots,3^{d}$, be all the \textit{neighbouring cubes} of $Q$ and write $Q^1 = Q$. Let
$$\W = \{\w = (w_1,w_2,\dots,w_{3^d}) : w_1 = 1 \text{ and } w_j > 0, j =2,\dots,3^d\}.$$
When $\nu \in \cM$ and $\w \in \W$ are fixed we will denote for all $j =1,\dots,3^d$ the $w_j$-\textit{weighted duplication} of the restriction $\nu_a := \nu \llcorner \I_a$ to the neighbouring cube $\I_a^j$ by:
$$\nu^{w_j,j}_a = w_j [\nu_a+x(\I_a^j)],$$
which is the same measure as $w_j T_{-x(\I_a^j),1 \sharp} \nu_a = \cT_{-x(\I_a^j),1,w_j}(\nu_a)$. Then write
$$\nu_{a}^\w = \sum_{j = 1}^{3^d} \nu^{w_j,j}_a.$$
Notice that $\nu_a^{\w} \llcorner \I_a = \nu_a$ for any $\w \in \W$. See Figure \ref{figEpsilon} for some intuition of using this notation.
\end{itemize}
\end{notations}

\begin{definition}[Metric on measures]
\label{metricmeasure} Fix $a \in \N$. Let $\cL(a)$ be the set of all Lipschitz functions $\phi : \R^d \to [0,\infty)$ with Lipschitz-constant $\Lip \phi \leq 1$ and support $\spt \phi \subset \I_a$. For $\mu,\nu \in \cM$ we write:
$$F_a(\mu,\nu) = \sup_{\phi \in \cL(a)} \Big|\int \phi \, d\mu- \int \phi \, d\nu\Big|$$
and
$$d(\mu,\nu) = \sum_{a = 1}^\infty 2^{-a} \min\{1,F_a(\mu,\nu)\}.$$
Then $(\cM,d)$ is a complete separable metric space, and the topology induced by $d$ agrees with the weak convergence. Note that here we abuse notation: $d$ also refers to the dimension of the ambient space $\R^d$.
\end{definition}

\begin{remark}
\label{remarkmetric}
\begin{itemize}
\item[(1)]
A similar metric of measures was used in \cite[Remark 14.15]{Mat95} with the difference that the closed ball $B(0,a)$ is used instead of $\I_a$ in the definition of $\cL(a)$. This changes the value of the metric $d$, but still all the properties of $d$ and $F_a$ given in \cite{Mat95} are satisfied. Especially, we have the following characterization of weak convergence: let $\mu_i,\mu \in \cM$, $i \in \N$. Then
$$\mu_i \to \mu \quad \Leftrightarrow \quad d(\mu_i,\mu) \to 0 \quad \Leftrightarrow \quad F_b(\mu_i,\mu) \to 0, i \to \infty \text{ for all } b \in \N,$$
see the proof of \cite[Lemma 14.13]{Mat95}.
\item[(2)] For a fixed $a \in \N$ we let the open ball with respect to the metric $F_a$ be:
$$U_a(\nu,\epsilon) = \{\mu \in \cM : F_a(\mu,\nu) < \epsilon\}.$$
It follows immediately that this set is also open with respect to the metric $d$. 
\end{itemize}
\end{remark}

\section{Proof of the main result}
\label{proofofmain}

In order to prove Theorem \ref{main1}, it is enough to construct a subset
$$\cR \subset \{\mu \in \cM : \Tan(\mu,x) = \cM \setminus\{0\} \text{ for } \mu \text{ a.e. } x \in \R^d\},$$
which is a countable intersection of open dense sets in $\cM$. We will now fix a number of parameters required to define such a set.

If $Q \in \cQ_1$, we let $\cL_Q$ be the normalized $d$-dimensional Lebesgue measure supported on $Q$, that is, $\cL_Q = \cL^d(Q)^{-1}\cL^d \llcorner Q$. Write
$$\cS = \Big\{\cL^d \llcorner (\R^d \setminus \I_n) + \sum_{Q} q_Q \cL_Q : q_Q > 0, q_Q \in \Q \text{ where } Q \in \cQ_1^n, Q \subset \I_n, n \in \N \Big\}.$$
Then $\cS \subset \cM$ is countable and dense in $(\cM,d)$. We especially need the following properties of measures $\nu \in \cS$ in our proof:
\begin{itemize}
\item[(1)] $\spt \nu = \R^d$;
\item[(2)] $\nu(\partial \I_a) = 0$ for every $a \in \Z$.
\end{itemize} 

\begin{definition}[Choices of $\beta_a$, $\epsilon_a$ and $\epsilon_a^\w$]\label{choices}
Let $\nu \in \cS$. Choose any sequence $\beta_a = \beta_a(\nu) \searrow 0$ with $\beta_a < 3^{-a}\nu(\I_{-a})^{-1}/4$ for any $a \in \N$. If $a \in \N$, we write:
$$\epsilon_a := \beta_a \nu(\I^d_{-a}) \in (0,3^{-a}/4).$$
Fix $a \in \N$ and $\w \in \W$. Choose any number $\epsilon \in (0,\epsilon_a)$ such that
\begin{align}
\label{alphachoice}\max\Big\{\nu(\I^+_{-a,\epsilon} \setminus \I^-_{-a,\epsilon}),\nu_a^{\w}(\I^+_{a,\epsilon} \setminus \I^-_{a,\epsilon})\Big\} < \epsilon_a.
\end{align}
We denote $\epsilon_a^\w := \epsilon$ to emphasize the dependence on $a$ and $\w$ for the choice of $\epsilon$. All this is possible because $\nu \in \cS$ and thus $\nu(\partial \I_{-a}) = 0 = \nu(\partial \I_{a})$. Indeed, this yields
$$\lim_{\epsilon \to 0} \nu(\I^+_{-a,\epsilon} \setminus \I^-_{-a,\epsilon}) = 0 = \lim_{\epsilon \to 0} \nu_a^{\w}(\I^+_{a,\epsilon} \setminus \I^-_{a,\epsilon}),$$
recall the Notation \ref{cubeweight}(3).
\end{definition}

\begin{definition}[The set $\cR$]
\label{setR}
If $a \in \N$ and $k \in \N$ we let
$$r_a^k = 3^{-(k+1)a}/2.$$
This number is half the side-length of a cube in $\cQ^{k+1}_a$. We are now planning to construct a countable intersection $\cR$ of open and dense sets. For each measure $\nu \in \cS$, parameter $a \in \N$ and generation $n \in \N$ we associate a set $\cR_{\nu,a,n} \subset \cM$ as follows. This subset consists of all measures $\mu \in \cM$ with the property that for a deep enough generation $k \geq n$, and for all cubes $Q \in \cQ^k_a$, $Q \subset \I_a$, there exists a normalization constant $c > 0$ and a weight vector $\w \in \W$ such that the blow-up $\cT_{x(Q),r_{a}^k,c}(\mu) = cT_{x(Q),r_{a}^k\sharp}\mu$ is $\epsilon_a\epsilon_a^\w$-close (in the $F_{a+1}$-distance) to the $\w$-weighted measure $\nu_a^\w$, see also Figure \ref{figMapT}.
In other words
$$\cR_{\nu,a,n} := \bigcup_{k \geq n} \bigcap_{Q \in \cQ^k_a \atop Q \subset \I_a} \bigcup_{c > 0} \bigcup_{\w \in \W} \cT_{x(Q),r_a^k,c}^{-1} U_{a+1}(\nu_{a}^\w,\epsilon_{a}\epsilon_{a}^{\w}).$$
There are only countably many $\cR_{\nu,a,n}$, $\nu \in \cS$, $a \in \N$ and $n \in \N$, so
$$\cR := \bigcap_{\nu \in \cS} \bigcap_{a \in \N}\bigcap_{n \in \N} \cR_{\nu,a,n}$$
is a countable intersection. See the outline of the proof in below for more heuristics on the choice of the parameters and the set $\cR$.
\end{definition}

\begin{figure}[h]
\begin{center}
\includegraphics[scale=0.55]{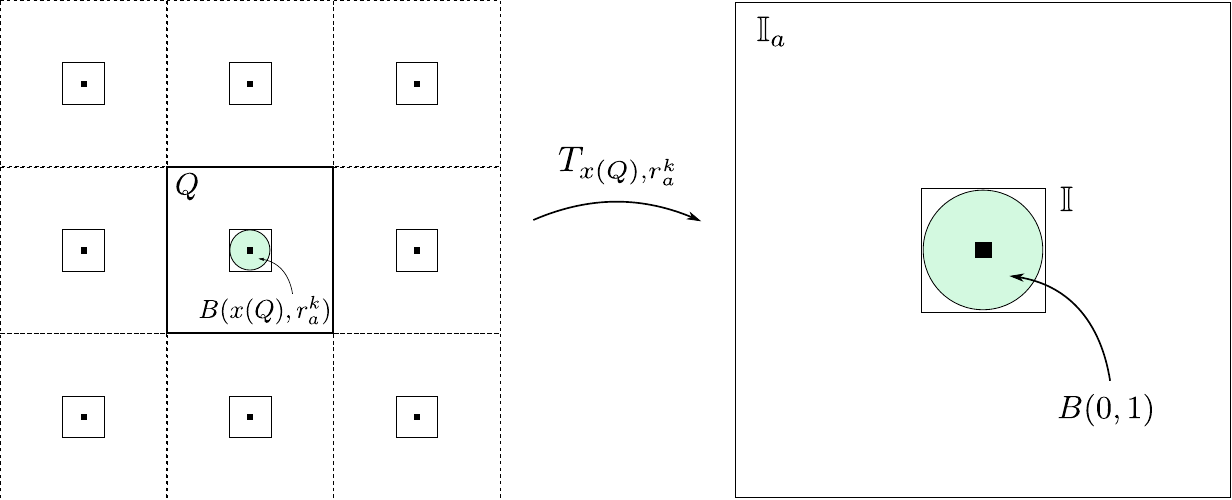}
\end{center}
\caption{The map $T_{x(Q),r_a^k}$ used in the definition of $\cR_{\nu,a,n}$ maps $Q$ onto $\I_a$ and $Q_c$ onto $\I_{-a}$ (the small black cube on the right-hand side), respectively.}
\label{figMapT}
\end{figure}

\textbf{Outline of the proof.} 
Since $\cS$ is dense in $\cM$ and $\Tan(\mu,x)$ is always closed in $\cM \setminus \{0\}$, we only need to verify for each $\nu \in \cS$ and $\mu \in \cR$ that $\nu \in \Tan(\mu,x)$ for $\mu$ almost every $x \in \R^d$. The set $\cR$ has the property that when $\nu \in \cS$ and $a \in \N$ are fixed we can find arbitrarily large generations $k \in \N$ such that the measure $\mu \in \cR$ will look in all cubes $Q \in \cQ_a^k$ like a small translate of $\nu_a^\w$ when we blow-up with respect to any point $x \in Q_c$, recall Notations \ref{cubeweight}(2). Since the relative size of the central cube $Q_c$ becomes very small compared to their ancestor in $\cQ_a^{k+1}$ when $a$ is large (in the factor of $3^{-a}$), the translates of $\nu_a^\w$ tend to look like $\nu$ since $\nu_a^\w$ restricted to $\I_a$ is $\nu \llcorner \I_a$. Here we need to use the measures $\nu_{a}^\w$ and the weights $\w \in \W$ in order to make $\cR_{\nu,a,n}$ dense in $\cM$.

Hence we should try to somehow cover $\mu$ almost every point of $\R^d$ with such nice cubes. What we will first do is that we fix some numbers $a,b \in \N$, and then invoke the definition of $\cR$ to find infinitely many generations $k$ such that the central cubes $Q_c$ of the cubes $Q \in \cQ_a^k$ cover some portion of some large reference cube $\I_b$ with respect to the measure $\mu$. However, verifying this produces some of the trickier parts of the proof. To this end we need the following generalization of the Borel-Cantelli Lemma (see for example \cite{Ren70}), where the condition on independence is replaced with a more quantitative statement:

\begin{lemma}
\label{borelcantelli}
Let $(\Omega,\cF,\P)$ be a probability space and $A_n \in \cF$, $n \in \N$, such that $\sum_{n = 1}^\infty \P(A_n) = \infty$. Then
$$\P\Big(\limsup_{n \to \infty} A_n\Big) \geq \limsup_{N \to \infty}\frac{\Big(\sum_{n = 1}^N \P(A_n)\Big)^2}{\sum_{n =1}^N \sum_{l = 1}^N \P(A_n \cap A_l)}.$$
\end{lemma}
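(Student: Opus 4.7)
The plan is to prove this via a standard second-moment (Cauchy--Schwarz / Paley--Zygmund) argument applied to tail sums of indicators, rather than to the original Borel--Cantelli setup. For $1 \le M \le N$, define the truncated counting variables $S_{M,N} = \sum_{n=M}^{N}\I_{A_n}$ and write
$$\mu_{M,N} := \E[S_{M,N}] = \sum_{n=M}^{N} \P(A_n), \qquad \sigma_{M,N} := \E[S_{M,N}^2] = \sum_{n,l=M}^{N} \P(A_n \cap A_l).$$
Set $B_M := \bigcup_{n \ge M} A_n$, so that $\limsup_n A_n = \bigcap_{M} B_M$ with $B_M$ decreasing in $M$.

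The first step is the Cauchy--Schwarz bound on $\P(S_{M,N} > 0)$. Since $S_{M,N} = S_{M,N}\I_{\{S_{M,N} > 0\}}$, Cauchy--Schwarz gives
$$\mu_{M,N}^2 = \E\bigl[S_{M,N}\I_{\{S_{M,N} > 0\}}\bigr]^2 \le \E[S_{M,N}^2]\cdot\P(S_{M,N} > 0) = \sigma_{M,N}\cdot \P(S_{M,N} > 0).$$
As $\{S_{M,N} > 0\} \subset B_M$, taking $\limsup$ in $N$ yields
$$\P(B_M) \ge \limsup_{N \to \infty} \frac{\mu_{M,N}^2}{\sigma_{M,N}}.$$

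The second step is to reduce the truncated ratios to the full ones. By hypothesis $\mu_{1,N} \to \infty$, while $\mu_{1,M-1}$ is finite for every fixed $M$; hence $\mu_{M,N}/\mu_{1,N} = 1 - \mu_{1,M-1}/\mu_{1,N} \to 1$ as $N\to \infty$. Since trivially $\sigma_{M,N} \le \sigma_{1,N}$, we obtain
$$\limsup_{N \to \infty} \frac{\mu_{M,N}^2}{\sigma_{M,N}} \ge \limsup_{N \to \infty} \frac{\mu_{1,N}^2}{\sigma_{1,N}}$$
for every $M \in \N$. Combining with the first step, $\P(B_M) \ge \limsup_{N} \mu_{1,N}^2/\sigma_{1,N}$ uniformly in $M$. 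Finally, continuity of $\P$ from above along the decreasing sequence $B_M$ gives
$$\P\bigl(\limsup_n A_n\bigr) = \lim_{M\to\infty} \P(B_M) \ge \limsup_{N \to \infty} \frac{\bigl(\sum_{n=1}^{N} \P(A_n)\bigr)^2}{\sum_{n,l=1}^{N} \P(A_n \cap A_l)},$$
which is the claimed inequality.

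None of the steps presents a genuine obstacle: the heart of the argument is the Cauchy--Schwarz inequality, which is the standard workhorse for strengthening Borel--Cantelli beyond the independent setting. The only subtlety is the passage from truncated to full sums in the second step, where the divergence assumption $\sum_n \P(A_n) = \infty$ is used crucially to absorb the finite correction $\mu_{1,M-1}$ into a negligible factor; without this hypothesis the inequality would fail to localize in $M$ and the intersection over $M$ could lose mass.
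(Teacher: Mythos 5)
Your proof is correct. The paper does not actually supply a proof of this lemma — it simply cites R\'enyi's book — so there is no internal argument to compare against. Your argument is the standard Chung--Erd\H{o}s/Kochen--Stone second-moment proof, and all the steps hold up: the Cauchy--Schwarz inequality applied to $S_{M,N}\I_{\{S_{M,N}>0\}}$ gives $\P(B_M)\ge \mu_{M,N}^2/\sigma_{M,N}$; the ratio $\mu_{M,N}/\mu_{1,N}\to 1$ because the divergence hypothesis makes $\mu_{1,M-1}$ a vanishing fraction of $\mu_{1,N}$; the monotonicity $\sigma_{M,N}\le\sigma_{1,N}$ is clear since you are dropping nonnegative terms; and the passage $\limsup_N(a_N b_N)=\limsup_N b_N$ is justified because $a_N\to 1$ and $b_N=\mu_{1,N}^2/\sigma_{1,N}\in[0,1]$ is bounded (itself a consequence of Cauchy--Schwarz). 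Finally, continuity from above along the decreasing sets $B_M$ closes the argument. This is a clean, self-contained proof of exactly the inequality the paper invokes.
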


In the proof of Theorem \ref{main1}, the events $A_n$ are exactly the unions $A_{a,n}^b$, $n \in \N$, of all $3^a$-adic central cubes $Q_c$ of certain generation $k_n$ cubes $Q$ in some large reference cube $\I_b$. Moreover, $\P$ is the normalization of $\mu \llcorner \I_b$ such that we have $F_{a+1}(cT_{x(Q),r_a^k\sharp} \mu,\nu_a^\w) < \epsilon_a\epsilon_a^\w$ for some $c = c(Q) > 0$ and $\w = \w(Q) \in \W$. We need the more general form of Borel-Cantelli's lemma here since our events $A_{a,n}^b$, $n \in \N$, are not in general $\P$-independent, but when $n \to \infty$, we can say something about their pairwise correlations.

In order to apply Lemma \ref{borelcantelli} we need to compare the measures $\mu(Q)$ and $\mu(Q_c)$ to each other using the comparison of $\nu$ measures of the reference cubes $\I_a = T_{x(Q),r_a^k}(Q)$ and $\I_{-a} = T_{x(Q),r_a^k}(Q_c)$, which is made possible by the knowledge of $F_{a+1}(cT_{x(Q),r_a^k\sharp} \mu,\nu_a^\w)$. In this way we gain the right measures for the sets $A_{a,n}^b$ and $A_{a,n}^b \cap A_{a,l}^b$.

However, when we do the $\mu$ measure comparison, we end up having some error terms coming out from the $\nu_a^\w$ measures of the buffer zones $\I^+_{a,\epsilon} \setminus \I^-_{a,\epsilon}$ and $\I^+_{-a,\epsilon} \setminus \I^-_{-a,\epsilon}$. However, by the choices we made in Definition \ref{choices}, these errors are at most of the size $\epsilon_a$, which is independent of generations $n$. Then we apply Lemma \ref{borelcantelli} to see that the $\mu$ measure of $A_a^b = \limsup_n A_{a,n}^b$ is nearly the same as $\mu(\I_b)$, and how near will depend on the numbers $\beta_a$ that arise from the errors $\epsilon_a$. Then it turns out that the set $A^b = \limsup_a A^b_a$ covers $\mu$ almost every point of $\I_b$, since as $a \to \infty$ the numbers $\beta_a \searrow 0$ by their choice. This way $\mu$ almost every point of the space $\R^d$ can be covered by the union of such sets $A^b$, $b \in \N$.

\begin{figure}[h]
\begin{center}
\includegraphics[scale=0.55]{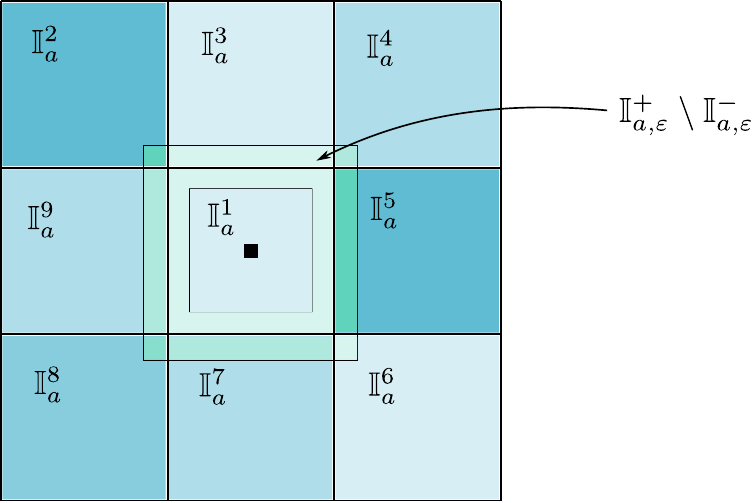}
\end{center}
\caption{The cube $\I_a^1 = \I_a$ and its neighbouring cubes $\I_a^j$, $j = 2,\dots,3^2$. We have weighted the cubes $\I_a^j$ with weights $w_j$, where the shade of the cube tells us how big the value of the weight $w_j$ is. This illustrates the measure $\nu_a^\w$: on $\I_a^j$ it equals to $w_j \nu_a$ translated to $\I_a^j$. We choose $\epsilon = \epsilon_a^\w$ such that the buffer zone $\I^+_{a,\epsilon} \setminus \I^-_{a,\epsilon}$ in the picture has $\nu_a^\w$ measure less than a fixed number $\epsilon_a > 0$. The bigger the weights $w_j$ are, the smaller $\epsilon$ we have to choose. The small black cube in the picture is $\I_{-a}$, and we want to choose $\epsilon$ to be small enough that even the $\nu$ measure of the small buffer zone $\I^+_{-a,\epsilon} \setminus \I^-_{-a,\epsilon}$ is less than $\epsilon_a$.}
\label{figEpsilon}
\end{figure}

\begin{lemma}
\label{opendense}
$\cR_{\nu,a,n}$ is open and dense in $\cM$.
\end{lemma}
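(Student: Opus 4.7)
For fixed $k$, a cube $Q \in \cQ^k_a$, $c > 0$, and $\w \in \W$, the pushforward map $\cT_{x(Q), r_a^k, c} : (\cM, d) \to (\cM, d)$ is continuous with respect to weak convergence, because $\int \phi \, d\cT_{x(Q), r_a^k, c}(\mu) = c \int (\phi \circ T_{x(Q), r_a^k}) \, d\mu$ and $\phi \circ T_{x(Q), r_a^k}$ is compactly supported and continuous whenever $\phi$ is. Hence each preimage $\cT_{x(Q), r_a^k, c}^{-1} U_{a+1}(\nu_a^\w, \epsilon_a \epsilon_a^\w)$ is open, the union over $c > 0, \w \in \W$ remains open, the intersection over the finitely many $Q \in \cQ^k_a$ with $Q \subset \I_a$ remains open, and the final union over $k \geq n$ remains open. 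Thus $\cR_{\nu, a, n}$ is open.

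\textbf{Density: construction.} Given $\mu \in \cM$ and $\delta > 0$, the plan is to construct $\tilde\mu \in \cR_{\nu, a, n}$ with $d(\tilde\mu, \mu) < \delta$ by replacing $\mu$, on a fine partition sitting near $\I_a$, by suitably scaled and weighted copies of $\nu_a$. Fix $k \geq n$ very large and $\eta_k > 0$ small. For each $Q \in \cQ_a^k$ with $Q \subset \I_a$ and $j = 1, \dots, 3^d$, let $D^j_Q := T_{x(Q), r_a^k}^{-1}(\I_a^j)$; this is a cube of side $3^{-ak}/2$ centred at $x(Q) + r_a^k x(\I_a^j)$, sitting on a half-step lattice. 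Different pairs $(Q, j)$ may produce the same cube, so I enumerate the distinct ones as $\{D_\alpha\}_{\alpha \in A}$, a finite collection. For each $\alpha \in A$, set $M_\alpha := \mu(D_\alpha) + \eta_k > 0$ and let $g_\alpha : \I_a \to D_\alpha$ be the affine homothety $g_\alpha(y) := x(D_\alpha) + r_a^k y$. Define
\[
\tilde\mu := \mu \llcorner \Big( \R^d \setminus \bigcup_{\alpha \in A} D_\alpha \Big) + \frac{1}{\nu(\I_a)} \sum_{\alpha \in A} M_\alpha \, g_{\alpha \sharp} \nu_a,
\]
so on each $D_\alpha$, $\tilde\mu$ is a rescaled copy of $\nu_a$ with total mass $M_\alpha$.

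\textbf{Verification and main obstacle.} For each $Q \in \cQ_a^k$ with $Q \subset \I_a$, set $c_Q := \nu(\I_a)/M_{D^1_Q}$ and $\w^Q \in \W$ by $w^Q_j := M_{D^j_Q}/M_{D^1_Q}$; note $w^Q_1 = 1$ and all $w^Q_j > 0$. The key technical point, and the main obstacle in calibrating the construction, is the geometric alignment built into $r_a^k$: a direct calculation gives that $T_{x(Q), r_a^k} \circ g_{D^j_Q}$ is the translation $y \mapsto y + x(\I_a^j)$. Consequently the pushforward of $\tilde\mu \llcorner D^j_Q$ under $\cT_{x(Q), r_a^k, c_Q}$ equals $w^Q_j (\nu_a + x(\I_a^j))$; summing over $j$ yields $\cT_{x(Q), r_a^k, c_Q}(\tilde\mu) \llcorner \I_{a+1} = \nu_a^{\w^Q}$ identically, so $F_{a+1}(\cT_{x(Q), r_a^k, c_Q}(\tilde\mu), \nu_a^{\w^Q}) = 0 < \epsilon_a \epsilon_a^{\w^Q}$, giving $\tilde\mu \in \cR_{\nu, a, n}$. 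To see $d(\tilde\mu, \mu) < \delta$, I test $\phi \in \cL(b)$ against $\tilde\mu - \mu$: only the modified cubes contribute, yielding a displacement error of order $3^{-ak}\mu(\I_a^+)$ from the Lipschitz-$1$ bound on cubes of diameter $\lesssim 3^{-ak}$, plus a mass-shift error of order $|A| \eta_k = O(3^{dak}\eta_k)$ from adding $\eta_k$ on each $D_\alpha$. Choosing $\eta_k := 3^{-2dak}$ and $k$ large makes $F_b(\tilde\mu, \mu) < \delta/2$ uniformly for $b \leq B$, where $B$ is chosen so $\sum_{b > B} 2^{-b} < \delta/2$, and combining gives $d(\tilde\mu, \mu) < \delta$.
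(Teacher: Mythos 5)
Your proof is correct and follows essentially the same strategy as the paper's: openness is handled identically (continuity of the pushforward map $\cT_{x,r,c}$, finiteness of the index set $\{Q \in \cQ_a^k : Q \subset \I_a\}$, and the fact that unions and finite intersections preserve openness), and density is shown by replacing $\mu$ on a fine $3^a$-adic grid near $\I_a$ by rescaled translates of $\nu_a$ whose weights are read off from $\mu$-masses, exploiting the affine alignment $T_{x(Q),r_a^k}\circ g_{D_Q^j} = (\cdot) + x(\I_a^j)$ so that the $F_{a+1}$-distance to $\nu_a^\w$ is exactly zero, and then checking weak convergence via a Lipschitz displacement bound.

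The one genuine variation is how positivity of the masses and weights is ensured. The paper first reduces to $\mu$ with $\spt\mu = \R^d$ (so all $\mu(Q) > 0$) and then constructs $\mu_k = \sum_Q \mu(Q)\nu^Q$ directly; you instead perturb additively by a small $\eta_k > 0$ on each cube, choosing $\eta_k$ so small relative to the cube count $|A| = O(3^{dak})$ that the extra mass-shift error vanishes as $k\to\infty$. Both are equally valid; your version avoids the separate ``full-support measures are dense'' step at the cost of one additional error term to track. You also handle the geometry carefully by enumerating the distinct cubes $D_\alpha = T_{x(Q),r_a^k}^{-1}(\I_a^j)$ as a half-step lattice, which is in fact forced by the paper's literal value $r_a^k = 3^{-(k+1)a}/2$ (the paper's own assertions $T_{x(Q),r_a^k}(Q) = \I_a$ and $\spt\nu^Q = Q$ implicitly assume $r_a^k = 3^{-(k+1)a}$ with no factor $1/2$); your argument works with either normalization, which is a point in its favour.

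Two very small presentational points: in the mass-shift estimate $\sum_\alpha |\phi(x(D_\alpha))|\,\eta_k$ you should record that $\|\phi\|_\infty$ is bounded (by a constant depending only on $B$, via $\Lip\phi \le 1$ and $\spt\phi \subset \I_B$), and it is worth stating explicitly that $\tilde\mu(D_\alpha) = M_\alpha$ because $g_{\alpha\sharp}\nu_a$ has total mass $\nu(\I_a)$, which is what makes $w_1^Q = 1$ and the weight formula consistent. Neither affects correctness.
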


\begin{proof}
(1) Let us first prove that $\cR_{\nu,a,n}$ is open in $\cM$. Fix $x \in \R^d$, $r > 0$ and $c > 0$. We will now show that $\cT := \cT_{x,r,c} : \cM \to \cM$ is continuous. This is enough for our claim since the balls $U_{a+1}$ in the definition of $\cR_{\nu,a,n}$ are also open in $(\cM,d)$ and the intersection in the definition of $\cR_{\nu,a,n}$ has a finite index set $\{Q \in \cQ^k_a : Q \subset \I_a\}$.  Suppose $\mu_i,\mu \in \cM$ are chosen such that $\mu_i \to \mu$. We need to verify for any fixed compactly supported continuous $\phi : \R^d \to \R$ we have
$$\int \phi\, d\cT(\mu_i) \to \int \phi\, d\cT(\mu), \quad \text{as } i \to \infty.$$
Hence fix a continuous and compactly supported $\phi : \R^d \to \R$, which makes $\phi \circ T_{x,r}$ also continuous and compactly supported. Since $\mu_i \to \mu$, we have
\begin{align*}
\Big|\int \phi \,  d\cT(\mu_i) - \int \phi\, d\cT(\mu)\Big| = c \Big|\int \phi \circ T_{x,r} \,  d\mu_i- \int \phi \circ T_{x,r} \, d\mu\Big| \longrightarrow 0,
\end{align*}
as $i \to \infty$. Hence $\cT(\mu_i) \to \cT(\mu)$ as $i \to \infty$, so $\cT$ is continuous like we claimed.

(2) Here we prove that $\cR_{\nu,a,n}$ is dense in $\cM$. Let $\mu \in \cM$ be a measure with $\spt \mu = \R^d$. For $k \in \N$ we write
$$\mu_k = \sum_{Q \in \cQ^k_a} \mu(Q)\nu^{Q}$$
where
$$\nu^{Q} := \cT^{-1}_{x(Q),r_a^k,\nu(\I_{a})}(\nu_{a}), \quad Q \in \cQ^k_a.$$
Fix $Q \in \cQ^k_a$. Notice that $\nu^{Q}(Q) = 1$ and $\spt \nu^{Q} = Q$. Since $\spt \mu = \R^d$ each of the numbers $\nu(\I_{a})/\mu(Q^j)$, $j = 1,\dots,3^d$, is well-defined, where $Q^j$ is the $j$th neighbouring cube of $Q$, recall Notations \ref{cubeweight}(3). Write
\begin{align}
\label{const}c(Q) := \frac{\nu(\I_{a})}{\mu(Q)} > 0.
\end{align}
Moreover, define weights $w_{1} = 1$ and
 $$w_j = c(Q) \cdot \frac{\mu(Q^j)}{\nu(\I_a)}, \quad j = 2,\dots,3^{d}.$$
Then
 $$\w = (w_1,\dots,w_{3^d}) \in \W.$$
 Let $\phi \in \cL(a+1)$. Since $\spt \phi \subset \I_{a+1} = \bigcup_{j = 1}^{3^d} \I_a^j$, we have by the choice \eqref{const} and the definition of weights $w_j$ that
$$\int \phi \, d\cT_{x(Q),r_a^k,c(Q)}(\mu_k) = \int \phi \, d\nu_a^\w.$$
Since $\phi \in \cL(a+1)$ is arbitrary, we have especially:
$$F_{a+1}(\cT_{x(Q),r_a^k,c(Q)}(\mu_k),\nu_a^\w) = 0,$$
so in particular
$$\mu_k \in \cT_{x(Q),r_a^k,c(Q)}^{-1} U_{a+1}(\nu_a^\w,\epsilon_a\epsilon_a^{\w}).$$ 
Since this is true for every $k \in \N$ and $Q \in \cQ^k_a$, we have $\mu_k \in \cR_{\nu,a,n}$ whenever $k \geq n$. With this in mind, let us finally verify
$$\mu_k \to \mu, \quad \text{as } k \to \infty.$$
Let $\phi : \R^d \to \R$ be a compactly supported continuous function. Then we may fix $b \geq a$ such that $\spt \phi \subset \I_{b}$. Fix $\epsilon > 0$. Since $\phi$ is uniformly continuous, we can choose $k_\epsilon \in \N$ such that for every $k \geq k_\epsilon$ and $Q \in \cQ^k_a$, we have: $|\phi(y)-\phi(x)| < \epsilon$ whenever $y,x \in Q$. On the other hand, $\nu^{Q}(Q) = 1$ for every $Q \in \cQ_a^k$, so we have:
\begin{align*}
\Big|\int \phi \, d\mu_k - \int \phi \, d\mu\Big| &=\Big|\sum_{Q \in \cQ^k_a \atop Q \subset \I_b} \Big(\int_{Q}[\phi-\phi(x(Q))] \, d\mu_k + \int_{Q}[\phi(x(Q))-\phi] \, d\mu\Big) \Big| \\
& \leq \sum_{Q \in \cQ^k_a \atop Q \subset \I_b} \int_{Q}|\phi-\phi(x(Q))| \, d\mu_k + \sum_{Q \in \cQ^k_a \atop Q \subset \I_b}\int_{Q}|\phi(x(Q))-\phi| \, d\mu \\
& \leq \sum_{Q \in \cQ^k_a \atop Q \subset \I_{b}} \epsilon \mu_k(Q)+\sum_{Q \in \cQ^k_a \atop Q \subset \I_{b}} \epsilon\mu(Q)= 2\epsilon \sum_{Q \in \cQ^k_a \atop Q \subset \I_{b}} \mu(Q) = 2\epsilon\mu(\I_b),
\end{align*}
since $\mu_k(Q) = \mu(Q) \nu^Q(Q) = \mu(Q)$ for any $Q \in \cQ^k_a$. Hence $\mu_k \to \mu$, as $k \to \infty$. 

Measures $\mu$ with $\spt \mu = \R^d$ are dense in $\cM$. Hence if $\mu' \in \cM$ is any measure, for any $\epsilon > 0$ we can choose $\mu \in \cM$ with $d(\mu',\mu) < \epsilon/2$ and $\spt \mu = \R^d$. Then just choose $k \geq n$ so large that $d(\mu_k,\mu) < \epsilon/2$, which gives $d(\mu_k,\mu') < \epsilon$. The measure $\mu_k \in \cR_{\nu,a,n}$ so $\cR_{\nu,a,n}$ is dense.
\end{proof}

\begin{lemma}
\label{tangentsforR}
If $\mu \in \cR$, then $\Tan(\mu,x) = \cM \setminus \{0\}$ for $\mu$ almost every $x \in \R^d$.
\end{lemma}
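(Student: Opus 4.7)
The first step is to reduce to showing, for each fixed $\nu\in\cS$ and $b\in\N$, that $\nu\in\Tan(\mu,x)$ for $\mu$-almost every $x\in\I_b$. This reduction uses that $\cS$ is countable and dense in $\cM$, that every $\nu\in\cS$ is non-zero, and that $\Tan(\mu,x)$ is closed in $\cM\setminus\{0\}$. Fix such $\nu,b$ (assume $\mu(\I_b)>0$, else vacuous) and a parameter $a\in\N$ with $a>b$, eventually sent to infinity. By $\mu\in\cR_{\nu,a,n}$ for every $n$, there is $k_n\geq n$ such that every $Q\in\cQ_a^{k_n}$ with $Q\subset\I_a$ admits $c_Q>0$ and $\w_Q\in\W$ with $F_{a+1}(\cT_{x(Q),r_a^{k_n},c_Q}(\mu),\nu_a^{\w_Q})<\epsilon_a\epsilon_a^{\w_Q}$. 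After thinning I may also require $k_{n+1}\geq k_n+2$, so that central cubes at distinct generations are either equal or strictly nested.

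Next I upgrade this $F_{a+1}$-bound to mass comparisons. For each such $Q$ and each $E\in\{\I_a,\I_{-a}\}$, I would test against a $(1/\epsilon_a^{\w_Q})$-Lipschitz function supported in $\I_{a+1}$ that equals $1$ on the $\epsilon_a^{\w_Q}$-contraction of $E$ and $0$ off its $\epsilon_a^{\w_Q}$-expansion; after rescaling into $\cL(a+1)$ the $F_{a+1}$-hypothesis and the buffer-zone bounds of Definition \ref{choices} together yield $|c_Q\mu(Q)-\nu(\I_a)|<2\epsilon_a$ and $|c_Q\mu(Q_c)-\nu(\I_{-a})|<2\epsilon_a$. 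Writing $\alpha_a:=\nu(\I_{-a})/\nu(\I_a)$ and using $\epsilon_a=\beta_a\nu(\I_{-a})$, these rearrange to the pointwise estimate $\mu(Q_c)=\alpha_a\mu(Q)(1+O(\beta_a))$, with absolute implicit constant.

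Set $A_n:=\bigcup\{Q_c:Q\in\cQ_a^{k_n},\,Q\subset\I_b\}$ and $\P:=\mu(\I_b)^{-1}\mu\llcorner\I_b$. Summing the pointwise estimate over the $\cQ_a^{k_n}$-tiling of $\I_b$ gives $\P(A_n)=\alpha_a(1+O(\beta_a))$. For $n<l$, since $k_l\geq k_n+2$, any cube $Q'\in\cQ_a^{k_l}$ meeting a central cube $Q_c$ of some $Q\subset\I_b$ of scale $k_n$ must lie inside $Q_c$; applying the pointwise estimate at scale $k_l$ to each such $Q'$ and summing gives $\mu(A_l\cap Q_c)=\alpha_a\mu(Q_c)(1+O(\beta_a))$, and then $\P(A_n\cap A_l)=\alpha_a^2(1+O(\beta_a))$. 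Feeding these into Lemma \ref{borelcantelli} produces $\P(\limsup_n A_n)\geq 1-C\beta_a$ for an absolute $C$. As $\beta_a\to 0$, reverse Fatou shows $\mu$-a.e.\ $x\in\I_b$ lies in $\limsup_n A_n$ for infinitely many $a$. For such $x$ and each such $a$, pick $n(a)$ and $Q^{(a)}\in\cQ_a^{k_{n(a)}}$ with $x\in Q^{(a)}_c\subset\I_b$, and set $r_a:=r_a^{k_{n(a)}}$, $c_a:=c_{Q^{(a)}}$, $y_a:=(x-x(Q^{(a)}))/r_a\in\I_{-a}$. Writing $\cT_{x,r_a,c_a}(\mu)$ as the $-y_a$-translate of $\cT_{x(Q^{(a)}),r_a,c_a}(\mu)$ and using $\nu_a^{\w}\llcorner\I_{b'}=\nu\llcorner\I_{b'}$ whenever $b'<a$ (since $\I_{b'}\subset\I_a=\I_a^1$ is disjoint from $\I_a^j$ for $j\neq 1$), the triangle inequality in $F_{b'}$ gives, for any fixed $b'\in\N$ and $a$ large, $F_{b'}(\cT_{x,r_a,c_a}(\mu),\nu)\leq\epsilon_a\epsilon_a^{\w_{Q^{(a)}}}+|y_a|\,\nu(\I_{b'+1})\to 0$. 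Combined with $r_a\to 0$, this exhibits $\nu$ as a tangent measure of $\mu$ at $x$.

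The main obstacle is the pairwise-intersection estimate $\P(A_n\cap A_l)\approx\alpha_a^2$ underlying the generalized Borel--Cantelli step: the events $A_n$ are not independent, so the classical lemma does not apply, and one must invoke the $\cR$-condition simultaneously at both scales $k_n$ and $k_l$ to view $\mu\llcorner Q_c$ at finer scales as an approximately rescaled copy of $\nu_a^{\w}\llcorner\I_{-a}$. Converting this picture into a clean $1+O(\beta_a)$ multiplicative error, and then propagating it through Lemma \ref{borelcantelli} to reach a near-full-measure lim sup, is where most of the delicate bookkeeping resides.
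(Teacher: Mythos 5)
Your proposal follows essentially the same route as the paper: reduce to showing $\nu\in\Tan(\mu,x)$ for $\mu$-a.e.\ $x$ for each $\nu$ in the countable dense set $\cS$, upgrade the $F_{a+1}$-closeness to two-sided mass comparisons $\mu(Q_c)\approx\alpha_a\mu(Q)$ via Lipschitz approximations of indicator functions of $\I_a$ and $\I_{-a}$, feed the resulting first- and second-moment estimates into the generalized Borel--Cantelli lemma (Lemma \ref{borelcantelli}) to make the central-cube $\limsup$ have $\P$-measure $\geq 1-O(\beta_a)$, pass $a\to\infty$, and finally translate from the cube centre $x(Q)$ to $x\in Q_c$ to extract $\nu$ as a tangent measure. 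Your explicit thinning to $k_{n+1}\geq k_n+2$ is a sensible small refinement: the paper's decomposition of $Q_c\in\cQ_a^{k_n+2}$ into $\cQ_a^{k_l}$-subcubes requires $k_l\geq k_n+2$, which the paper leaves implicit but your thinning makes automatic.
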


\begin{proof} Fix $\mu \in \cR$. Since $\Tan(\mu,x)$ is closed in $\cM \setminus \{0\}$ and $\cS$ is dense in $\cM$, it is enough to show that $\cS \subset \Tan(\mu,x)$ for $\mu$ almost every $x \in \R^d$. 

Fix $\nu \in \cS$ and $a \in \N$. Since $\mu \in \cR$, we can choose for every $n \in \N$ an index $k := k_n \geq n$ such that for each $Q \in \cQ^k_a$, $Q \subset \I_a$, there are numbers $c = c(Q) > 0$ and weights $\w = \w(Q) \in \W$ such that
$$\mu \in \cT_{x(Q),r_a^k,c}^{-1} U_{a+1}(\nu_{a}^{\w},\epsilon_a\epsilon_a^\w).$$
Write
$$\mu_{Q} = \cT_{x(Q),r_a^k,c}(\mu) = cT_{x(Q),r_a^k\sharp} \mu.$$
Especially this measure satisfies
\begin{align}
\label{distbound}
F_{a+1}(\mu_Q,\nu_a^{\w}) < \epsilon_a \epsilon_a^\w.
\end{align}
Consider the sets
$$A_{a,n} = \bigcup_{Q \in \cQ^k_a \atop Q\subset \I_a} Q_c, \quad A_a = \limsup_{n \to \infty} A_{a,n}, \quad \text{and} \quad A = \limsup_{a \to \infty} A_a,$$
keeping in mind that $k = k_n$ and $Q_c \in \cQ^{k+2}$ is the central cube of $Q$, recall Notation \ref{cubeweight}(2). Let us first show that
\begin{align}
\label{limsup}
\mu(\R^d \setminus A) = 0.
\end{align}
We may assume that $\mu(\R^d) > 0$ since otherwise \eqref{limsup} is trivial. Then we may choose $b_0 \in \N$ such that $\mu(\I_{b_0}) > 0$. Fix $b \geq b_0$. Then
$$\P = \mu(\I_b)^{-1}\mu \llcorner \I_b$$
is a well-defined probability measure on $\R^d$ and $\spt \P \subset \I_b$. Write
$$A_{a,n}^b = A_{a,n} \cap \I_b, \quad A_a^b = \limsup_{n \to \infty} A_{a,n}^b, \quad \text{and} \quad A^b = \limsup_{a \to \infty} A_a^b.$$
We will now show that for any $a \geq b$ we have:
\begin{align}
\label{downmeasure}
\P(A_a^b)\geq \Big(\frac{1-2\beta_a}{1+2\beta_a}\Big)^4,
\end{align}
where $\beta_a$ is the number from Definition \ref{choices}. Let us first estimate the measure of $A_{a,n}^b$ in the case of $a \geq b$. When $Q \in \cQ^k_a$ is fixed, we will write for notational simplicity $\epsilon := \epsilon_a^\w$, and
$$\I^{+}_{a} := \I^{+}_{a,\epsilon}, \quad \I^{-}_{a} := \I^{-}_{a,\epsilon}, \quad \I^{+}_{-a} := \I^{+}_{-a,\epsilon}, \quad \text{and} \quad \I^{-}_{-a} := \I^{-}_{-a,\epsilon},$$
recall the definition of $\epsilon$-extensions from Notations \ref{cubeweight}(1), but keep in mind that these cubes depends on the cube $Q$. Choose $\phi^+_a,\phi^-_a,\psi^+_a,\psi^-_a \in \cL(a+1)$ as follows:
\begin{tabbing}
\quad \= (1) \quad \= $0 \leq \phi^+_a \leq \epsilon \chi_{\I^+_a}$, \quad \= $\phi^+ | \I_a \equiv \epsilon$, \quad \= and \quad \= $0 \leq \phi^-_a \leq \epsilon \chi_{\I_a}$, \quad \= $\phi^-_a | \I^-_a \equiv \epsilon$; \\
\> (2) \> $0 \leq \psi^+_a \leq \epsilon \chi_{\I^+_{-a}}$, \> $\psi^+_a | \I_{-a} \equiv \epsilon$, \> and \> $0 \leq \psi^-_a \leq \epsilon \chi_{\I_{-a}}$, \> $\psi^-_a | \I^-_{-a} \equiv \epsilon$.
\end{tabbing}
This is possible, since we have chosen $\epsilon = \epsilon_a^\w < \epsilon_a < 3^{-a}/4 = \ell(\I^d_{-a})/4$ so $\I^+_a = \I^{+}_{a,\epsilon} \subset \I_{a+1}$ and even in the small cube $\I^-_{-a}$ there is room to extend piecewise $1$-linearly the characteristic funtion of $\I^-_{-a}$ times $\epsilon$ to $\I_{-a}$. We will now prove:
\begin{align}
\label{help1}
|\mu_{Q}(\I_a) - \nu(\I_a)| < 2\beta_a\nu(\I_{-a});
\end{align}
and
\begin{align}
\label{help2}
|\mu_{Q}(\I_{-a}) - \nu(\I_{-a})| < 2\beta_a\nu(\I_{-a}).
\end{align}
Since $w_1 = 1$ (the weight of $\I_a(1) = \I_a$ is $1$), we always have:
$$\nu_a^\w \llcorner \I_{a} = \nu_a = \nu \llcorner \I_a.$$
Now recall \eqref{alphachoice}. If $\mu_{Q}(\I_a) > \nu(\I_a)$ we have by the estimate \eqref{distbound} that
\begin{align*}
\mu_{Q}(\I_a) - \nu(\I_a) &\leq \frac{1}{\epsilon} \int \phi^+_a \, d\mu_{Q} - \nu_a^\w(\I^-_a) \\
&\leq \frac{1}{\epsilon} \Big|\int \phi^+_a \, d\mu_{Q} - \int \phi^+_a \, d\nu^\w_a\Big|  +  \frac{1}{\epsilon}\int \phi^+_a \, d\nu^\w_a - \nu_a^\w(\I^-_a)\\
& \leq \frac{F_{a+1}(\mu_{Q},\nu_a^\w)}{\epsilon}+\nu_a^\w(\I^+_a \setminus \I^-_a) \leq \epsilon_a + \nu_a^\w(\I^+_a \setminus \I^-_a) \\
& < 2\beta_a \nu(\I_{-a}),
\end{align*}
and if $\mu_{Q}(\I_a) \leq \nu(\I_a)$, we have similarly
\begin{align*}
\nu(\I_a) - \mu_{Q}(\I_a) &\leq \nu_a^\w(\I^+_a) - \frac{1}{\epsilon} \int \phi^-_a \, d\mu_{Q} \\
&\leq \nu_a^\w(\I^+_a) - \frac{1}{\epsilon} \int \phi^-_a \, d\nu^\w_a+\frac{1}{\epsilon} \Big|\int \phi^-_a \, d\nu^\w_a - \int \phi^-_a \, d\mu_{Q}\Big|\\
& \leq \nu_a^\w(\I^+_a \setminus \I^-_a)+\frac{F_{a+1}(\mu_{Q},\nu_a^\w)}{\epsilon} \leq \nu_m^\w(\I^+_a \setminus \I^-_a)+\epsilon_a \\
& < 2\beta_a \nu(\I^d_{-a}).
\end{align*}
Hence \eqref{help1} holds. If we invoke again the estimates \eqref{alphachoice} and \eqref{distbound}, and now additionally the properties $\nu_a^{\w} \llcorner \I_a = \nu \llcorner \I_a$ and $\I^+_{-a} \subset \I_a$, and the choices of  $\psi^{\pm}_a$ we can prove \eqref{help2} with a symmetric argument as above. Write
$$\rho_a = \frac{1+2\beta_a}{1-2\beta_a} \quad \text{and} \quad p_a = \frac{\nu(\I_{-a})}{\nu(\I_{a})}.$$
Since $\nu(\I_{-a}) \leq \nu(\I_{a})$, the estimates \eqref{help1} and \eqref{help2}, $T_{x(Q),r_a^k}(Q) = \I_a$ and $T_{x(Q),r_a^k}(Q_c) = \I_{-a}$ imply
\begin{align*}
\mu(Q_{c}) = \frac{\mu_{Q}(\I_{-a})}{\mu_{Q}(\I_{a})} \cdot \mu(Q) \geq \frac{(1-2\beta_a)\nu(\I_{-a})}{(1+2\beta_a)\nu(\I_{a})} \cdot \mu(Q) = \rho_a^{-1} p_a \mu(Q)
\end{align*}
and in similar manner
\begin{align*}
\mu(Q_{c}) \leq \rho_a  p_a  \mu(Q).
\end{align*}
Since $\P(\I_b) = 1$ we have:
$$\rho_a^{-1} p_a \leq \P(A_{a,n}^b) = \sum_{Q \in \cQ^k_a \atop Q \subset \I_b}\P(Q_c) \leq \rho_a p_a.$$
Fix $n,l \in \N$ and estimate the $\P$ measure of the intersection $A_{a,n}^b \cap A_{a,l}^b$. If the generations $k_n = k_l$, which we chose accordingly to $n$ and $l$, the cube unions $A_{a,n}^b = A_{a,l}^b$, and so
$$\P(A_{a,n}^b \cap A_{a,l}^b) = \P(A_{a,n}^b) \leq \rho_a p_a.$$
Suppose $k_n < k_l$. Then for each $Q \in \cQ^{k_n}_a$ we can decompose the central cube $Q_c$ into the generation $k_l$ subcubes:
$$Q_c = \bigcup_{R \in \cQ^{k_l}_a \atop R \subset Q_c} R.$$
In particular, the intersecting cubes
$$A_{a,n}^b \cap A_{a,l}^b = \bigcup_{Q \in \cQ^{k_n}_a \atop Q \subset \I_b} \bigcup_{R \in \cQ^{k_l}_a \atop Q \subset Q_c} R_c,$$
see Figure \ref{figIntersection} for an illustration.

\begin{figure}[h]
\begin{center}
\includegraphics[scale=0.65]{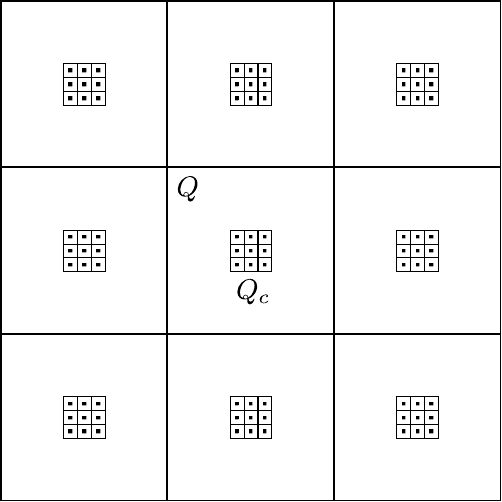}
\end{center}
\caption{Illustration of the intersection $A_{a,n}^b \cap A_{a,l}^b$. Estimating the mass of this intersection is then reduced to estimating the ratios between small black cubes $R_c$ and $Q$. This comparison produces an error given by the number $\rho_a$.}
\label{figIntersection}
\end{figure}

Invoking $\P(\I_b) = 1$ we can now estimate:
\begin{align*}
\P(A_{a,n}^b \cap A_{a,l}^b) =  \sum_{Q \in \cQ^{k_n}_a \atop Q \subset \I_b} \sum_{R \in \cQ^{k_l}_a \atop R \subset Q_c} \P(R_c) \leq  \sum_{Q \in \cQ^{k_n}_a \atop Q \subset \I_b} \rho_a p_a \P(Q_c) \leq \rho_a^2p_a^2.
\end{align*}
Similarly, if the generations $k_n > k_l$ we have the same result, since we can just change the order of $n$ and $l$. Fix $N \in \N$. Then by the estimates above
$$\sum_{n = 1}^N \sum_{l = 1}^N \P(A_{a,n}^b \cap A_{a,l}^b) \leq N[(N-1)\rho_a^2p_a^2+\rho_a p_a]$$
and
$$\Big(\sum_{n = 1}^N \P(A_{a,n}^b)\Big)^2 \geq N^2\rho_a^{-2}p_a^2.$$
On the other hand, the sum
$$\sum_{n = 1}^\infty \P(A_{a,n}^b) \geq \sum_{n = 1}^\infty \rho_a^{-1}p_a = +\infty,$$
since $\rho_a^{-1}p_a > 0$ is a number independent of $n$. So we are allowed to apply Lemma \ref{borelcantelli}:
\begin{align*}\P(A_a^b)&\geq \limsup_{N \to \infty} \frac{\Big(\sum_{n = 1}^N \P(A_{a,n}^b)\Big)^2}{\sum_{n = 1}^N \sum_{l = 1}^N \P(A_{a,n}^b \cap A_{a,l}^b)} \\
& \geq \limsup_{N \to \infty} \frac{N^2 \rho_a^{-2}p_a^2}{N[(N-1)\rho_a^2p_a^2+\rho_a p_a]} = \rho_a^{-4},\end{align*}
which is exactly \eqref{downmeasure}. 

We are now practically finished, since \eqref{downmeasure} implies for any $a \geq b$ that
$$\P\big(\bigcup_{a' \geq a} A^b_{a'}\big) \geq \P(A_a^b) \geq \rho_a^{-4},$$
so by the convergence of measures and the fact that $\rho_a \searrow 1$, as $a \to \infty$, we obtain:
$$\P(A^b) = \P\big(\bigcap_{a \in \N}\bigcup_{a' \geq a} A^b_{a'}\big)= 1.$$
Then recalling that $\P = \mu(\I_b)^{-1} \mu \llcorner \I_b$, we have shown:
$$\mu(\R^d \setminus A) \leq \sum_{b = b_0}^\infty \mu(\I_b \setminus A^b) = 0,$$
so $\mu$ almost every $x \in \R^d$ is an element of $A$.

Lemma \ref{tangentsforR} is thus proved if we can show that $\nu$ is a tangent measure of $\mu$ at every $x \in A$. Fix an $x \in A$, and choose infinitely many $a \in \N$ such that $x \in A_a$. Fix such an $a$ and choose infinitely many $n \in \N$ such that $x \in Q_c$ for the unique $Q \in \cQ^k_a$ for which $x \in Q$. Recall the estimate \eqref{distbound} from the beginning of the proof, that is, the choice of $k = k_n$ implies that each of these cubes have the property
$$F_{a+1}(\mu_Q,\nu_{a}^\w) < \epsilon_{a}\epsilon_{a}^{\w},$$
for some constant $c = c(Q) > 0$ and weights $\w = \w(Q) \in \W$, where 
$$\mu_Q = \cT_{x(Q),r_a^k,c}(\mu) = cT_{x(Q),r_a^k\sharp} \mu.$$ 
Then after passing to a subsequence, we may find increasing sequences $(a_i)_{i \in \N}$ and $(k_i)_{i \in \N}$ of natural numbers such that $a_i,k_i \nearrow \infty$ and for any $i \in \N$ we have:
\begin{itemize}
\item[(1)] the point $x \in Q_{i,c}$, where $Q_{i,c}$ is the central cube of $Q_i$ and $Q_i$ is the unique cube in $\cQ^{k_i}_{a_i}$ containing $x$;
\item[(2)] if $x_i = x(Q_i)$ and $r_i = r_{a_i}^{k_i} = 3^{-(k_i+1)a_i}/2 \searrow 0$, then the distance
$$F_{a_i+1}(c_iT_{x_i,r_i \sharp} \mu,\nu_{a_i}^{\w_i}) < \epsilon_{a_i}\epsilon_{a_i}^{\w_i}$$ 
for some weights $\w_i \in \W$ and constants $c_i > 0$.
\end{itemize}
We will now show that 
$$c_j T_{x,r_j \sharp} \mu \to \nu, \quad \text{as } j \to \infty.$$
By Remark \ref{remarkmetric}(1) it is enough to verify $F_{b}(c_j T_{x,r_j \sharp} \mu,\nu) \to 0$ as $j \to \infty$ for any fixed $b \in \N$. Let $b \in \N$ and $\phi \in \cL(b)$. After passing to a subsequence, we may assume that $\I_{b+1} \subset \I_{a_i+1}$ for all $i \in \N$. Write $z_i := (x-x_i)/r_i$, $i \in \N$. Since 
$$|z_i| = \frac{|x-x_i|}{3^{-a_i}\ell(Q_i)/2} \leq \frac{\ell(Q_{i,c})}{3^{-a_i}\ell(Q_i)/2} = 2 \cdot 3^{-{a_i}},$$
this particularly implies that $\spt (\phi \circ T_{z_i,1}) = \spt \phi + z_{i} \subset \I_{b+1} \subset \I_{a_i+1}$ for every $i \in \N$. On the other hand by the definition of $\nu^{\w_i}_{a_i}$ we have $\nu_{a_i}^{\w_i} \llcorner \I_{a_i} = \nu_{a_i}$ and so:
$$\int \phi\, dT_{z_i,1\sharp}\nu_{a_i}^{\w_i} = \int \phi\, dT_{z_j,1\sharp}\nu =\int_{\I_{b+1}} \phi(x-z_i) \, d\nu x.$$
Hence using the fact that $\phi$ is $1$-Lipschitz we have shown:
\begin{align*}
&\left|\int \phi\, dT_{z_i,1\sharp}\nu_{a_i}^\w  - \int \phi \, d\nu\right| = \left|\int_{\I_{b+1}} \phi(x-z_i) \, d\nu x  - \int_{\I_{b+1}} \phi \, d\nu\right| \\
 & \leq \int_{\I_{b+1}} |\phi(x-z_i) - \phi(x)| \, d\nu x \leq |z_i|\nu(\I_{b+1}) \leq 2 \cdot 3^{-a_i} \nu(\I_{b+1}).
\end{align*}
The mapping $\phi \circ T_{z_i,1} \in \cL(a_i+1)$: we already had $\spt (\phi \circ T_{z_i,1}) \subset \I_{a_i+1}$, and it is $1$-Lipschitz:
$$|\phi \circ T_{z_i,1}(y) - \phi \circ T_{z_i,1}(z)| \leq |T_{z_i,1}(y)-T_{z_i,1}(z)| = |y-z|, \quad y,z \in \R^d.$$
Hence as Definition \ref{choices} in particularly gives $\epsilon_{a_i}^{|\w_i|} < \epsilon_{a_i}$, we have:
\begin{align*}
& \Big|\int \phi \, d(c_i T_{x,r_i \sharp} \mu)- \int \phi \, dT_{z_i,1\sharp}\nu_{a_i}^{\w_i}\Big| \\
&= \Big|\int \phi \circ T_{z_i,1} \,  d(c_i T_{x_i,r_i \sharp} \mu)- \int\phi \circ T_{z_i,1} \, d\nu_{a_i}^{\w_i}\Big|\\
& \leq F_{a_i+1}(c_i T_{x_i,r_i \sharp} \mu,\nu_{a_i}^{\w_i}) < \epsilon_{a_i}\epsilon_{a_i}^{\w_i} < \epsilon_{a_i} \cdot \epsilon_{a_i}.
\end{align*}
Since $\phi \in \cL(b)$ is arbitrary, we have reached our goal:
\begin{align*}
F_b(c_i T_{x,r_i \sharp} \mu,\nu) \leq \epsilon_{a_i} \cdot \epsilon_{a_i} + 2 \cdot 3^{-a_i} \nu(\I_{b+1}) \longrightarrow 0,
\end{align*}
as $i \to \infty$, finishing the proof of Lemma \ref{tangentsforR}.
\end{proof}

Combining Lemma \ref{opendense} and Lemma \ref{tangentsforR}, we have shown that a typical measure $\mu \in \cM$ satisfies $\Tan(\mu,x) = \cM\setminus \{0\}$ at $\mu$ almost every $x \in \R^d$, and thus Theorem \ref{main1} is proven.

\section{Measures are typically non-doubling}
\label{application}

As a direct consequence of Theorem \ref{main1} we can say something about the doubling behavior of typical measures.

\begin{definition}
A measure $\mu \in \cM$ satisfies the \textit{doubling condition} at $x \in \R^d$ if
$$\limsup_{r \to 0} \frac{\mu(B(x,2r))}{\mu(B(x,r))} < \infty.$$
Measure $\mu$ is \textit{non-doubling} if the doubling condition fails at $\mu$ almost every $x \in \R^d$. Notice that non-doubling measures are always singular with respect to the Lebesgue measure in $\R^d$.
\end{definition}

\begin{corollary}
\label{cor1}
A typical measure $\mu \in \cM$ is non-doubling.
\end{corollary}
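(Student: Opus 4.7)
The plan is to derive Corollary 4.2 directly from Theorem 1.1. By that theorem a typical $\mu \in \cM$ satisfies $\Tan(\mu,x) = \cM \setminus \{0\}$ at $\mu$-a.e.\ $x \in \R^d$, so it suffices to show that the pointwise doubling condition must fail at every $x$ where the tangent set is maximal.

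Fix such an $x$ and suppose toward a contradiction that there exist $C > 0$ and $r_0 > 0$ with $\mu(B(x,2r)) \leq C\mu(B(x,r))$ for every $r \in (0,r_0)$. The idea is to produce a tangent measure of $\mu$ at $x$ that is incompatible with this estimate. I would select a non-zero $\nu \in \cM$ concentrated strictly inside the annulus $U(0,2) \setminus B(0,1)$ --- for instance the point mass $\delta_y$ at any $y$ with $1 < |y| < 2$ --- so that $\nu(B(0,1)) = 0$ but $\nu(U(0,2)) > 0$. By the hypothesis on $x$ we have $\nu \in \Tan(\mu,x)$, so there exist $r_i \searrow 0$ and $c_i > 0$ with $c_i T_{x,r_i\sharp}\mu \to \nu$ weakly.

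Applying the standard portmanteau-type inequalities for weak convergence of Radon measures to the compact set $B(0,1)$ and the bounded open set $U(0,2)$ yields
\[
\limsup_{i \to \infty} c_i \mu(B(x,r_i)) \leq \nu(B(0,1)) = 0
\]
and
\[
\liminf_{i \to \infty} c_i \mu(U(x,2r_i)) \geq \nu(U(0,2)) > 0.
\]
The doubling bound, however, gives $c_i \mu(U(x,2r_i)) \leq c_i \mu(B(x,2r_i)) \leq C c_i \mu(B(x,r_i))$ for all sufficiently large $i$, which combined with the first display forces $\liminf_i c_i \mu(U(x,2r_i)) = 0$, contradicting the second. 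Hence doubling fails at every $x$ where $\Tan(\mu,x) = \cM \setminus \{0\}$, and typical $\mu$ is non-doubling.

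No serious obstacle is anticipated: the only substantive input beyond Theorem 1.1 is the two one-sided portmanteau inequalities, and both follow directly from the definition of weak convergence via compactly supported continuous test functions by sandwiching the indicators of $B(0,1)$ and $U(0,2)$ between suitable Lipschitz bump functions in $\cL(a)$ for a large enough $a$.
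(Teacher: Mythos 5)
Your argument is correct, and it takes a genuinely different route from the paper. The paper invokes a black-box result of Preiss (\cite[Proposition 2.2 and Corollary 2.7]{Pre87}) stating that the doubling condition of $\mu$ at $x$ is characterized by a uniform doubling bound $\nu(B(0,2r)) \leq C\nu(B(0,r))$ holding for all $\nu \in \Tan(\mu,x)$ and $r > 0$; it then exhibits tangent measures $\nu_n = \cL^d \llcorner B(0,n)^c$ that violate any such uniform bound. Your proof bypasses Preiss entirely: it picks a single tangent measure $\delta_y$ with $1 < |y| < 2$, rewrites the doubling assumption at the scales $r_i$ furnished by the tangent-measure definition, and derives a contradiction directly from the two one-sided portmanteau inequalities $\limsup_i c_i\mu(B(x,r_i)) \leq \delta_y(B(0,1)) = 0$ and $\liminf_i c_i\mu(U(x,2r_i)) \geq \delta_y(U(0,2)) = 1$, both valid for weak convergence of Radon measures in $\R^d$. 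What your approach buys is self-containment: it requires nothing beyond Theorem \ref{main1}, the definition of tangent measure, and elementary properties of weak convergence, whereas the paper leans on an external characterization. What the paper's approach buys is conceptual economy once Preiss's machinery is on hand, and its example $\nu_n$ highlights that the failure of doubling is visible from tangent measures supported away from the origin at arbitrarily large scales. One cosmetic point: you should note explicitly that the argument applies only at points $x \in \spt\mu$, since otherwise the doubling ratio is $0/0$; this is harmless because the set where $\Tan(\mu,x) = \cM\setminus\{0\}$ is automatically contained in $\spt\mu$.
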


\begin{proof}
The results of Preiss in \cite[Proposition 2.2 and Corollary 2.7]{Pre87} imply that doubling condition of $\mu$ at $x$ can be characterized by the existence of a constant $C \geq 1$ such that for every $\nu \in \Tan(\mu,x)$ and $r > 0$ we have 
$$\nu(B(0,2r)) \leq C\nu(B(0,r)).$$
If $\Tan(\mu,x) = \cM \setminus \{0\}$, then clearly the doubling condition cannot be satisfied: for example measures $\nu_n = \cL^d \llcorner B(0,n)^c$ satisfy: 
$$\nu_n(B(0,2n)) > 0 = C\nu_n(B(0,n))$$ 
for any $C \geq 1$, yet $\nu_n \in \Tan(\mu,x)$ for every $n \in \N$. Hence the claim follows from Theorem \ref{main1}.
\end{proof}

\begin{remark}
Bate and Speight proved in \cite{BS11} that when a measure $\mu$ on a metric space admits a differentiable structure, then $\mu$ satisfies the doubling condition $\mu$ almost everywhere. Hence Corollary \ref{cor1} also says that with respect to the Euclidean metric a typical $\mu$ in $\R^d$ does not admits a differentiable structure in $\R^d$. It would be interesting to see if Corollary \ref{cor1} could be generalized to other interesting classes of complete metric spaces.
\end{remark}

\section{Sharpness of the result}
\label{sharpness}

A natural question to ask further is that can the property $\Tan(\mu,x) = \cM\setminus\{0\}$ be made to hold at \text{every} point $x \in \spt \mu$ of a typical measure $\mu$. However, this is not possible by the following observation. Here $\cL$ is the Lebesgue-measure on $\R$ and $\cL^+$ is the Heaviside-measure $\cL \llcorner [0,\infty)$.

\begin{proposition}
\label{nontangent}
If $\mu$ is a measure on $\R$ with non-empty support, then there exists $x \in \spt \mu$ such that $\cL \notin \Tan(\mu,x)$ or $\cL^+ \notin \Tan(\mu,x)$.
\end{proposition}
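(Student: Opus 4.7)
The plan is to do a case analysis on the topological structure of $\spt \mu \subset \R$, using throughout the elementary principle that if every rescaling $cT_{x,r\sharp}\mu$ is supported in a fixed closed set $C \subset \R$, then every weak limit, and hence every $\nu \in \Tan(\mu,x)$, is also supported in $C$. First I would handle the case $x_0 := \inf \spt \mu > -\infty$: the definition of support gives $\mu((-\infty, x_0)) = 0$, so tangents at $x_0$ are supported in $[0,\infty)$ and $\cL \notin \Tan(\mu,x_0)$. A symmetric argument at $\sup \spt \mu < \infty$ yields $\cL^+ \notin \Tan(\mu, \sup \spt \mu)$.

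Next, if $\spt \mu$ is unbounded both above and below yet $\spt \mu \neq \R$, then every connected component of the open set $\R \setminus \spt \mu$ must be a bounded interval, and I would pick one such component $(a,b)$ with $a,b \in \spt \mu$. For $0 < r < b - a$ we have $\mu((a, a+r)) = 0$, hence $cT_{a, r\sharp}\mu$ is supported in $(-1, 0]$; taking weak limits, every tangent at $a$ is supported in $(-\infty, 0]$, so $\cL^+ \notin \Tan(\mu,a)$.

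The remaining case $\spt \mu = \R$ is the main obstacle, since the support-containment trick no longer applies. If $\mu$ has an atom at $x_*$, then a non-atomic weak limit of $c_i T_{x_*, r_i \sharp}\mu$ would force $c_i\mu(\{x_*\}) \to 0$ by upper semicontinuity of mass on the closed set $\{0\}$, hence $c_i \to 0$; but $\mu([x_* - r_i, x_* + r_i]) \to \mu(\{x_*\})$ as $r_i \to 0$, so every continuous compactly supported test function integrates to zero against the limit, ruling out $\cL, \cL^+ \in \Tan(\mu, x_*)$. For the remaining non-atomic case with full support, the cumulative distribution function $F(x) := \mu([0,x]) - \mu((x,0])$ is continuous and strictly increasing, and I would try to locate a point $x$ where the ratio $\rho(x,r) := \mu([x - r, x])/\mu([x, x+r])$ cannot simultaneously admit a sequence with $\rho(x, r_i) \to 1$ (required for $\cL \in \Tan(\mu,x)$) and another with $\rho(x, s_i) \to 0$ (required for $\cL^+ \in \Tan(\mu,x)$) as $r_i, s_i \to 0$. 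A Baire-category argument on the oscillation of $\rho(x,\cdot)$, or a Vitali covering at an appropriate scale, looks most promising here.
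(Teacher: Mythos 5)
Your handling of the easy cases is fine and essentially matches the spirit of the paper's Case $1^\circ$: when $\spt\mu \neq \R$ you locate a gap and use support-containment of weak limits to rule out $\cL$ or $\cL^+$ (one small slip: if $(a,b)$ is a gap, $T_{a,r\sharp}\mu$ is not ``supported in $(-1,0]$''; it merely vanishes on $(0,1)$, and then on $(0,R)$ for $r$ small enough depending on $R$ --- but the intended conclusion, that every tangent at $a$ is supported in $(-\infty,0]$, is still correct). The atomic case is also handled correctly.

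The genuine gap is the final case, $\spt\mu = \R$ and $\mu$ non-atomic, which is the entire content of the proposition once the boundary cases are stripped away. You correctly reduce the task to finding one point $x$ at which the left/right mass ratio $\rho(x,r)$ cannot have a subsequence tending to $0$, but you do not construct such a point; you only say that ``a Baire-category argument \ldots or a Vitali covering \ldots looks most promising.'' This is not a proof, and it is far from obvious that either of those tools closes the gap: neither gives an obvious mechanism for producing a single point where the ratio is bounded away from $0$ at all small scales. The paper's proof of this case is a concrete recursive ``chasing'' construction: starting from a base point $y_0$ and radius $r_0$ with $F_3(c_{y_0,r_0}T_{y_0,r_0\sharp}\mu,\cL^+)<\epsilon^2$, one repeatedly looks in a short window to the right of the current point for a scale $s_i\in(r_{i+1},r_i]$ and location $y_i$ where the blow-up is again $\epsilon^2$-close to $\cL^+$; if found, one jumps to $x_i=y_i+r_i$, otherwise one stays put. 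The limit $x=\lim x_i$ lies in every window, and a quantitative Lipschitz test-function estimate shows that for every $r\le r_0$ the blow-up $c_{x,r}T_{x,r\sharp}\mu$ is at $F_2$-distance at least $\min\{\epsilon'/52,\epsilon^2\}$ from $\cL^+$: either because near $x$ the measure has non-negligible mass on \emph{both} sides at scale $r$ (since $x$ sits just to the right of a point where $\mu$ has a near-Heaviside jump), or because by construction no nearby blow-up at that scale was $\epsilon^2$-close to $\cL^+$ to begin with. That explicit mechanism --- turning a ``good approximation to $\cL^+$ at $(y_i,s_i)$'' into a ``bad approximation at all nearby points at comparable scales'' --- is the key idea you are missing, and a vague appeal to Baire category does not supply it.
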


\begin{remark}
Even though the statement of Proposition \ref{nontangent} is in $\R$, it could be extended to $\R^d$ with a similar proof. More precisely, we can use nearly similar techniques to show that for any $\mu \in \cM$ with non-empty support there exists $x \in \spt \mu$ such that either the Lebesgue-measure $\cL^d \notin \Tan(\mu,x)$ or $\cL^{d,+} \notin \Tan(\mu,x)$, where the measure $\cL^{d,+}$ is $\cL^d$ restricted to the set 
$$\R^{d,+} = \{(x_1,x_2,\dots,x_d) : x_i \geq 0 \text{ for all }i = 1,\dots,d\}.$$
\end{remark}

Before we state the proof, let us first observe the following

\begin{remark}
\label{heavisidetangent}
If $\mu$ is a measure, $x \in \spt \mu$, and for some $c_i > 0$ and $r_i \searrow 0$, we would have $c_i T_{x,r_i \sharp} \mu \to \cL^+$, then we can choose a subsequence $(i_j)_{j \in \N}$ such that 
$$\mu(B(x,r_{i_j}))^{-1} T_{x,r_{i_j} \sharp} \mu \to \cL^+, \quad \text{as }j \to \infty.$$ 
This is verified in \cite[Remark 14.4(1)]{Mat95} if we use it in the case $R = 1$ and $\nu = \cL^+$, and notice that $\cL^+(U(0,1)) = \cL^+(B(0,1)) = 1$. 
\end{remark}

\begin{proof}[Proof of Proposition \ref{nontangent}]
Write $A = \spt \mu$. We have two separate cases.

$1^\circ$ Suppose $A$ is a proper subset of $\R$. Since $A$ is closed and non-empty, we can choose $x \in A$ and $\epsilon > 0$ such that either $(x-\epsilon,x) \cap A = \emptyset$ or $(x,x+\epsilon) \cap A = \emptyset$. Let us prove that 
$$\cL \notin \Tan(\mu,x).$$
We may assume that $(x,x+\epsilon) \cap A = \emptyset$, the other case is symmetric. Suppose on the contrary that there exists $c_i > 0$ and $r_i \searrow 0$ such that $c_i T_{x,r_i\sharp} \mu \to \cL$ as $i \to \infty$. Fix $i_0 \in \N$ such that $r_i < \epsilon$ for each $i \geq i_0$. Fix a continuous $\phi : \R \to \R$ such that $\spt \phi \subset (0,1)$ and $\int \phi \, d\cL = 1$. Then for each $i \geq i_0$ we have:
$$\int \phi \, d\cL = 1 \neq 0 = \int \phi \, d(c_i T_{x,r_i \sharp} \mu),$$
which is a contradiction with $c_i T_{x,r_i\sharp} \mu \to \cL$. Hence $\cL \notin \Tan(\mu,x)$.

$2^\circ$ Suppose $A = \R$. Let us now find $x \in \R$ such that
$$\cL^+ \notin \Tan(\mu,x).$$
If $x \in \R$ and $r > 0$, denote $c_{x,r} = \mu(B(x,r))^{-1}$. Fix any number $0 < \epsilon < 1/20$. Then the constant $\epsilon' := \epsilon/16 - 5\epsilon^2/4 > 0$. Fix any $y_0 \in \R$. Pick some $r_0 > 0$ such that 
$$F_3(c_{y_0,r_0} T_{y_0,r_0\sharp} \mu,\cL^+) < \epsilon^2,$$
recall the definition of $F_3$ in Definition \ref{metricmeasure}. If we cannot choose such $r_0$, Remarks \ref{remarkmetric}(1) and \ref{heavisidetangent} would imply that $\cL^+ \notin \Tan(\mu,y_0)$, which finishes the proof. Write $r_i = 4^{-i}r_0$, $i \in \N$. Let us now construct a sequence of points $x_0,x_1,x_2, \dots \in \R$. First we let $x_0 = y_0 + r_0$. Fix $i \geq 1$ and suppose the points $x_0,\dots,x_{i-1}$ have already been constructed. If there exists $y_i \in [x_{i-1},x_{i-1}+r_i]$ and $s_i \in (r_{i+1},r_i]$ such that
\begin{align}
\label{yiest}
F_3(c_{y_i,s_i} T_{y_i,s_i\sharp} \mu,\cL^+) < \epsilon^2,
\end{align}
we let $x_i = y_i+r_i$, see Figure \ref{figPointy}. Otherwise, if such a choice cannot be made, we let $x_i = x_{i-1}$.

\begin{figure}[h]
\begin{center}
\includegraphics[scale=0.8]{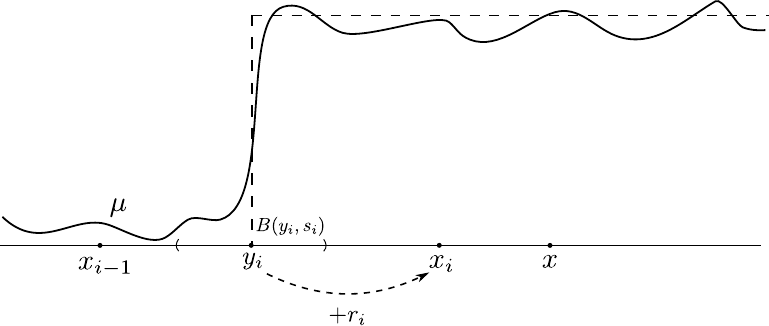}
\end{center}
\caption{The choice of the point $x_i$ when we can choose $y_i \in [x_{i-1},x_{i-1}+r_i]$ and $s_i \in (r_{i+1},r_i]$ such that \eqref{yiest} is satisfied. Since $c_{y_i,s_i} T_{y_i,s_i\sharp} \mu$ is close to the Heaviside-measure $\cL^+$, we move to the right in the construction since here $c_{x_i,r} T_{x_i,r\sharp}\mu$ is quite far from $\cL^+$ with respect to the distance $F_3$ for all scales $r \in (r_{i+1},r_i]$ by the choice of $s_i$. Furthermore, the limit $x = \lim_{i \to \infty} x_i$ is then quite close to the point $x_i$ and thus also $c_{x,r} T_{x,r\sharp}\mu$ is quite far from $\cL^+$ for all scales $r \in (r_{i+1},r_i]$.}
\label{figPointy}
\end{figure}

This way we have constructed a sequence of reals $x_0,x_1,x_2,\dots$ such that for any $i \geq 1$ either we could choose $y_i \in [x_{i-1},x_{i-1}+r_i]$ and $s_i \in (r_{i+1},r_i]$ such that \eqref{yiest} is satisfied and $x_i := y_i + r_i$, \textit{or} $x_i := x_{i-1}$, whence
\begin{align}
\label{lowery}
F_3(c_{y,s} T_{y,s\sharp} \mu,\cL^+) \geq \epsilon^2 \quad \text{ for all } y \in [x_{i-1},x_{i-1}+r_i] \text{ and } s \in (r_{i+1},r_i].
\end{align}
Fix now $i,j \in \N$, $j > i$. By construction for any $k \in \N$ we have $x_k \in [x_{k-1},x_{k-1}+2r_k]$, so
$$x_j \in [x_i,x_i+2r_{i+1}+2r_{i+2}+\dots+2r_j] \subset [x_i,x_i+r_i],$$
since $\sum_{i = 1}^\infty 4^{-i} = 1/3 < 1/2$. Thus the limit $x = \lim_{i \to \infty} x_i$ exists and $x \in [x_i,x_i+r_i]$ for every $i \in \N$.

Fix a radius $0 < r \leq r_0$ and choose $i \in \N$ such that $r_{i+1} < r \leq r_i$. Suppose \eqref{yiest} holds. Define a map $H_i : \R \to \R$ by:
$$H_i(z) = \frac{s_i}{r}z+T_{x,r}(y_i), \quad z \in \R.$$ 
Then by definition $H_i \circ T_{y_i,s_i} = T_{x,r}$. Let $\phi : \R \to [0,\epsilon]$ be any Lipschitz-map with
$$\Lip \phi \leq 1/4, \quad \spt \phi \subset [-1,0] \quad \text{and} \quad \phi|[-1+\epsilon,-\epsilon] = \epsilon/4;$$ 
and let $\psi : \R \to [0,\epsilon]$ be any Lipschitz-map with
$$\Lip \psi \leq 1/4, \quad \spt \psi \subset [-1-\epsilon,1+\epsilon] \quad \text{and} \quad \psi|[-1,1] = \epsilon/4.$$
Now in particular $\spt (\phi \circ H_i), \spt (\psi\circ H_i) \subset [-12,12] \subset \I_3$ and 
$$\Lip (\phi \circ H_i), \Lip (\psi\circ H_i) \leq \frac{s_i}{r} \cdot \frac{1}{4} < 1$$
since $s_i \leq r_i$ and $r > r_{i+1} = r_i/4$. Hence $\phi \circ H_i,\psi \circ H_i \in \cL(3)$, so by \eqref{yiest} we have:
$$c_{y_i,s_i} \int \psi \circ H_i \, dT_{y_i,s_i\sharp} \mu \leq \int \psi \circ H_i \, d\cL^+ + \epsilon^2.$$
Hence
\begin{align*}
\mu(B(x,r)) &\leq \frac{4}{\epsilon}\int \psi \, dT_{x,r\sharp} \mu = \frac{4\mu(B(y_i,s_i))}{\epsilon} \cdot c_{y_i,s_i} \int \psi \circ H_i \, dT_{y_i,s_i\sharp} \mu \\
& \leq \frac{4\mu(B(y_i,s_i))}{\epsilon} \cdot \Big(\int \psi \circ H_i \, d\cL^+ + \epsilon^2\Big) \\
& \leq \frac{4\mu(B(y_i,s_i))}{\epsilon}(\epsilon\cL^+([-12,12])+\epsilon^2)\leq 52\mu(B(y_i,r_i)).
\end{align*}
If $I = H_i^{-1}[-1+\epsilon,-\epsilon]$, then $I$ is an interval and of the length
$$\ell(I) = (1-2\epsilon)r/s_i \geq 1/4-\epsilon.$$
Moreover, the choice of $\phi$ yields $(\phi \circ H_i)|I = \epsilon/4$. Thus by \eqref{yiest} we have
$$\int \phi \circ H_i \, d\cL^+ - \epsilon^2 \leq c_{y_i,s_i} \int \phi \circ H_i \, dT_{y_i,s_i\sharp} \mu.$$
Hence
\begin{align*}
&\Big|\int \phi \,d(c_{x,r} T_{x,r \sharp} \mu)-\int \phi \, d\cL^+\Big| = c_{x,r}\int \phi \,dT_{x,r \sharp} \mu = c_{x,r} \int \phi \circ H_i\,dT_{y_i,s_i \sharp} \mu \\
& \geq \frac{c_{x,r}}{c_{y_i,s_i}} \Big(\int \phi \circ H_i\,d\cL^+ - \epsilon^2\Big) \geq \frac{c_{x,r}}{c_{y_i,s_i}} \Big(\frac{\epsilon}{4}\cL^+(I) - \epsilon^2\Big) \geq \frac{c_{x,r}}{c_{y_i,s_i}} \epsilon' \geq \epsilon'/52.
\end{align*}
Therefore,
$$F_{2}(c_{x,r} T_{x,r \sharp} \mu,\cL^+) \geq \epsilon'/52.$$
On the other hand, if \eqref{lowery} holds for the index $i \in \N$, we immediately have:
$$F_{2}(c_{x,r} T_{x,r \sharp} \mu,\cL^+) \geq \epsilon^2$$
since $x \in [x_i,x_i+r_i] = [x_{i-1},x_{i-1}+r_i]$ in this case. Hence for any $0 < r \leq r_0$ we have: 
$$F_{2}(c_{x,r} T_{x,r \sharp} \mu,\cL^+) \geq \min\{\epsilon'/52,\epsilon^2\},$$
yielding that $\cL^+ \notin \Tan(\mu,x)$ by Remarks \ref{remarkmetric}(1) and \ref{heavisidetangent}.
\end{proof}

\section{Micromeasures}
\label{secmicro}

The notion of micromeasures is a symbolic way to define local blow-ups of measures in trees, and in this setting we can also obtain a similar result to Theorem \ref{main1}. Micromeasures have just recently been considered for instance in \cite{HS09, Shm11}. Let $I = \{1,2,\dots,b\}$, where $b \in \N$ is fixed. If $n \in \N$, we write
$$I^n = \{(x_1,x_2,\dots,x_n) : x_i \in I\},\,\, I^* = \bigcup_{n \in \N} I^n \,\, \text{and} \,\, I^\N = \{(x_1,x_2,\dots) : x_i \in I\}.$$
Then $I^\N$ is a compact metric space with the metric $d(x,y) = 2^{-(x \land y)}$, $x,y \in I^\N$, where $x \land y$ is the first index $i \in \N$ when $x_i$ differs from $y_i$. When $x = (x_1,x_2,\dots) \in I^\N$ or $x \in I^m$ with $m \geq n$, we let $x|n \in I^n$ be the $n$:th cut of $x$, that is $x|n = (x_1,x_2,\dots,x_n)$. If $y \in I^n$, we let the cylinder generated by $y$ be:
$$[y] := \{x \in I^\N: x_i = y_i, i = 1,\dots,n\}.$$
Let $\cP = \cP(I^\N)$ be the set of all Borel probability measures on $I^\N$. If $\mu \in \cP$ and $y \in I^n$ with $\mu[y] > 0$, we denote 
$$\mu_y[z] = \frac{\mu[yz]}{\mu[y]}, \quad z \in I^*,$$
that is, the normalized restriction of $\mu$ to $[y]$ shifted back to $I^\N$. This notion defines a Borel probability measure on $I^\N$. We can metrize $\cP$ with the following distance:
$$\pi(\mu,\nu) = \sup_{\phi \in \cL}\Big|\int \phi \, d\mu - \int \phi \, d\nu\Big|, \quad \mu,\nu \in \cP,$$
where $\cL$ is the set of all Lipschitz-maps $\phi : I^\N \to \R$ with $\Lip \phi \leq 1$ and maximal value $\|\phi\|_{\infty} \leq 1$. The set $\cP$ can be equipped with the weak topology, which agrees with the topology induced by $\pi$. Moreover, the compactness of $I^\N$ yields that $(\cP,\pi)$ is a compact metric space.

\begin{definition}
\label{micro}
A probability measure $\nu \in \cP$ is a \textit{micromeasure} of $\mu \in \cP$ at $x \in I^\N$ if there exists $n_i \nearrow \infty$ such that
$$\mu_{x|n_i} \longrightarrow \nu, \quad \text{as }i \to \infty.$$ 
The set of micromeasures of $\mu$ at $x$ is denoted by $\micro(\mu,x)$, which is a closed subset of $\cP$.
\end{definition}

We obtain the following

\begin{theorem}
\label{microtypical}
A typical $\mu \in \cP$ satisfies $\micro(\mu,x) = \cP$ at every $x \in I^\N$.
\end{theorem}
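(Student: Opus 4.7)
The plan is to mimic the structure of the proof of Theorem~\ref{main1}, exploiting the compactness of $I^\N$ and the fact that cylinders in $I^\N$ are clopen, which makes the symbolic setting significantly simpler. First I fix a countable dense subset $\cS \subseteq \cP$, which exists since $(\cP,\pi)$ is compact metric, hence separable. Since $\micro(\mu,x)$ is closed in $\cP$, it suffices to produce a residual set $\cR \subseteq \cP$ such that $\cS \subseteq \micro(\mu,x)$ for every $x \in I^\N$ whenever $\mu \in \cR$. For each $\nu \in \cS$, rational $\epsilon > 0$ and $N \in \N$, I set
$$\cR_{\nu,\epsilon,N} := \{\mu \in \cP : \forall\, x \in I^\N\ \exists\, k \geq N \text{ with } \mu[x|k] > 0 \text{ and } \pi(\mu_{x|k},\nu) < \epsilon\},$$
and let $\cR := \bigcap_{\nu,\epsilon,N} \cR_{\nu,\epsilon,N}$, a countable intersection. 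The task reduces to showing that each $\cR_{\nu,\epsilon,N}$ is open and dense.

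For openness, I would fix $\mu \in \cR_{\nu,\epsilon,N}$ and a small $\delta > 0$; for each $x \in I^\N$ choose $k(x) \geq N$ with $\mu[x|k(x)] > 0$ and $\pi(\mu_{x|k(x)},\nu) < \epsilon - \delta$. The cylinders $[x|k(x)]$ form an open cover of the compact space $I^\N$, from which I extract a finite subcover indexed by words $w_1,\dots,w_m \in I^*$, each of length $\geq N$. Since every $[w_j]$ is clopen in $I^\N$, the maps $\mu \mapsto \mu[w_j]$ and $\mu \mapsto \mu_{w_j}$ (wherever $\mu[w_j] > 0$) are continuous in the weak topology; hence both $\mu[w_j] > 0$ and $\pi(\mu_{w_j},\nu) < \epsilon$ persist on a $\pi$-neighbourhood of $\mu$, and any $\mu'$ in such a neighbourhood lies in $\cR_{\nu,\epsilon,N}$ because every $x \in I^\N$ sits in some $[w_j]$, whence $k := |w_j|$ witnesses the defining condition.

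For density, I would fix $\mu_0 \in \cP$ and $\delta > 0$. First, replacing $\mu_0$ by a small convex combination with the uniform Bernoulli measure yields $\mu_1 \in \cP$ with full support and $\pi(\mu_1,\mu_0) < \delta/2$. Then pick $n \geq N$ large and define
$$\mu'[wz] := \mu_1[w]\,\nu[z], \qquad w \in I^n,\ z \in I^*,$$
which is a probability measure on $I^\N$ satisfying $\mu'_w = \nu$ for every $w \in I^n$; in particular every $x \in I^\N$ witnesses $\cR_{\nu,\epsilon,N}$ via $k = n$. Since $\mu'$ agrees with $\mu_1$ on all length-$n$ cylinders and every $\phi \in \cL$ varies by at most $2^{-n}$ on each such cylinder, a routine estimate gives $\pi(\mu',\mu_1) \leq 2 \cdot 2^{-n} < \delta/2$ for $n$ large; by the triangle inequality $\pi(\mu',\mu_0) < \delta$, and $\mu' \in \cR_{\nu,\epsilon,N}$.

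Finally, for $\mu \in \cR$, any $x \in I^\N$, and any $\nu \in \cS$, applying the defining condition with $\epsilon = 1/m$ and $N = m$ yields $k_m \geq m$ with $\mu[x|k_m] > 0$ and $\pi(\mu_{x|k_m},\nu) < 1/m$, so $\mu_{x|k_m} \to \nu$ and $\nu \in \micro(\mu,x)$; density of $\cS$ and closedness of $\micro(\mu,x)$ then give $\micro(\mu,x) = \cP$. I expect the openness step to be the main technical point: it relies on the clopen nature of cylinders so that both $\mu \mapsto \mu[w]$ and $\mu \mapsto \mu_w$ vary continuously on the open set $\{\mu : \mu[w] > 0\}$; this uniform control, which has no direct analogue in the Euclidean setting of Theorem~\ref{main1}, is precisely what allows the conclusion to hold at \emph{every} point of $I^\N$ rather than only $\mu$-almost everywhere.
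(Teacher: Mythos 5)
Your proposal is correct and follows the same high-level strategy as the paper (Baire category via a countable dense $\cS \subset \cP$, open-dense sets $\cR_{\nu,\cdot}$, density by splicing scaled copies of $\nu$ onto deep cylinders), but it differs in how the basic open-dense sets are set up, and this shifts where the work lands. The paper takes
$$\cR_{\nu,n} = \bigcup_{k \geq n}\; \bigcap_{y \in I^k}\; \cT_y^{-1}\,U(\nu,1/k),$$
so the blow-up condition is \emph{uniform in the generation}: one $k$ works for every word $y$ of that length. Since $I^k$ is finite and each $\cT_y^{-1}U$ is open on the open set $\{\mu : \mu[y] > 0\}$, openness of $\cR_{\nu,n}$ is immediate, with no compactness argument. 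You instead define $\cR_{\nu,\epsilon,N}$ by requiring, for \emph{each} $x \in I^\N$, some $k(x) \geq N$ with $\pi(\mu_{x|k(x)},\nu) < \epsilon$, which allows the generation to vary with $x$; openness then needs a compactness argument on $I^\N$ to reduce to finitely many cylinders. Both choices give the theorem, and your formulation is a perfectly reasonable variant — arguably more intuitive since it directly asserts the condition one wants at every $x$ — but it buys that intuitiveness at the cost of the extra subcover step.

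One small slip in your openness argument: you fix $\delta > 0$ and then claim you can choose $k(x)$ with $\pi(\mu_{x|k(x)},\nu) < \epsilon - \delta$, but membership in $\cR_{\nu,\epsilon,N}$ only gives the strict bound $\pi(\mu_{x|k(x)},\nu) < \epsilon$, with no uniform margin over $x$. The fix is to reverse the order: for each $x$ pick $k(x) \geq N$ with $\mu[x|k(x)] > 0$ and $\pi(\mu_{x|k(x)},\nu) < \epsilon$, extract a finite subcover $[w_1],\dots,[w_m]$ of $I^\N$, and \emph{then} set $\delta := \epsilon - \max_j \pi(\mu_{w_j},\nu) > 0$ (a positive number because the max is over a finite set of quantities strictly below $\epsilon$). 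Continuity of $\mu \mapsto \mu[w_j]$ and of $\mu \mapsto \mu_{w_j}$ on $\{\mu : \mu[w_j] > 0\}$ then gives the desired neighbourhood. The density argument and the final passage from $\cR$ to $\micro(\mu,x) = \cP$ are correct as written.
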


\begin{proof} The proof below resembles the proof of Theorem \ref{main1}, but the steps are dramatically simpler. Namely, here we do not have to worry about the measures of boundaries nor how to fit balls and cubes to each other and the same time worry about $\mu$ almost every point. The core is similar, so we will leave out some of the details. 

First of all, choose a countable dense $\cS \subset \cP$ such that $\nu[y] > 0$ for every $\nu \in \cS$ and $y \in I^*$. When $y \in I^*$ and $\mu[y] > 0$ we denote:
$$\cT_y(\mu) = \mu_y.$$
With this in mind, define
$$\cR = \bigcap_{\nu \in \cS} \bigcap_{n \in \N} \cR_{\nu,n} \quad \text{and} \quad \cR_{\nu,n} = \bigcup_{k \geq n} \bigcap_{y \in I^k} \cT_y^{-1} U(\nu,1/k),$$
where the ball $U(\nu,1/n)$ is taken with respect to the metric $\pi$. Suppose $\nu \in \cS$ and $n \in \N$. Let us first verify that $\cR_{\nu,n}$ is open and dense in $\cP$. \begin{itemize}
\item[(1)] Since $\partial[y] = \emptyset$ for any $y \in I^*$, the map $\cT_y : \{\mu \in \cP :\mu[y] > 0\} \to \cP$ is continuous. Moreover, the set $\{\mu \in \cP : \mu[y] > 0\} \subset \cP$ is open, which yields that for any open $U \subset \cP$ the pre-image $\cT_y^{-1}U$ is open in $\cP$. In particular, $\cR_{\nu,n}$ is open in $\cP$.
\item[(2)] If $\mu' \in \cP$ and $\epsilon > 0$, we may choose $\mu \in \cP$ such that $\mu[y] > 0$ for every $y \in I^*$ and $\pi(\mu,\mu') < \epsilon/2$. Fix $k \in \N$ and denote
$$\mu^k = \sum_{y \in I^k} \mu[y] \nu^y,$$
where $\nu^y[z] = \nu[z]/\nu[y]$ for each $z \in I^m$, $m \geq k$, with $z|k = y$ and $\nu^y[z] = 0$ otherwise. Then $\cT_y(\mu^k) = \nu$ for each $y \in I^k$, $k \in \N$, so $\mu^k \in \cR_{\nu,n}$ if $k \geq n$. Moreover, as in the proof of Lemma \ref{opendense}, we have: $\mu_k \to \mu$, as $k \to \infty$ (we just replace $Q$ by $y$). Thus we can fix $k \geq n$ such that $\pi(\mu^k,\mu) < \epsilon/2$ yielding $\pi(\mu^k,\mu') < \epsilon$. In particular, $\cR_{\nu,n}$ is dense in $\cP$.
\end{itemize}
In order to finish the proof we fix $\mu \in \cR$ and verify that for a fixed $x \in I^\N$ we have $\micro(\mu,x) = \cP$. Since $\micro(\mu,x)$ is closed in $\cP$, and $\cS \subset \cP$ is dense, we only need to check that $\nu \in \micro(\mu,x)$ for a fixed $\nu \in \cS$. By the definition of $\cR$, there exists $n_i \nearrow \infty$, $i \to \infty$, such that
$$\mu \in \bigcap_{y \in I^{n_i}} \cT_y^{-1} U(\nu,1/n_i), \quad i \in \N.$$
Especially $\mu \in \cT_{x|n_i}^{-1} U(\nu,1/n_i)$ for any $i \in \N$. This is exactly what we wanted:
$$\pi(\mu_{x|n_i},\nu) < 1/n_i, \quad i \in \N,$$
so $\mu_{x|n_i} \to \nu$ as $i \to \infty$.
\end{proof}

\section{Further problems}
\label{problems}

\subsection{Micromeasure distributions} Micromeasure distributions provide a probabilistic way to describe which measures tend to occur more often as local blow-ups $\mu_{x|n}$ of $\mu \in \cP$, and thus tell us what the ``expected'' micromeasures of $\mu$ are. Let us first expand some of the notation in Section \ref{micro}. This notation was used in \cite{Shm11}. Write
$$\Xi = \{(\mu,x) \in \cP(I^\N) \times I^\N : \mu[x|n] > 0 \text{ for all }n \in \N\}.$$
Let $\sigma : I^\N \to I^\N$ be the shift, that is, $\sigma(x_1,x_2,\dots) = (x_2,x_3,\dots)$, if $x = (x_1,x_2,\dots) \in I^\N$. Define the map $\ZOOM : \Xi \to \Xi$ by
$$\ZOOM(\mu,x) = (\mu_{x_1},\sigma x), \quad (\mu,x) \in \Xi.$$
If $n \in \N$ let $\ZOOM^n$ be the $n$-fold composition of the mapping $\ZOOM$. Notice that by definition $\ZOOM^n(\mu,x) = (\mu_{x|n},\sigma^n x)$.

\begin{definition}
\label{microdistribution}
Fix $(\mu,x) \in \Xi$, that is, $\mu \in \cP$ and $x \in \spt \mu$. We say that a Borel probability measure $P$ on $\Xi$ is a \textit{micromeasure distribution} of $\mu$ at $x$ if there exists $N_i \nearrow \infty$, $i \to \infty$, such that
$$\frac{1}{N_i}\sum_{n = 1}^{N_i} \delta_{\ZOOM^n(\mu,x)} \longrightarrow P, \quad \text{as } i \to \infty,$$
where the convergence is taken with respect to the weak topology on $\cP(\Xi)$. 
\end{definition}

We already know that any measure is a micromeasure of a typical measure $\mu \in \cP$, but could we say something more about their distribution?

\begin{problem}
\label{problemmicrodist}
What are the micromeasure distributions of a typical measure $\mu \in \cP$?
\end{problem}

Similarly, one could ask an analogous question for \textit{tangent measure distributions}, see for example \cite{MP98}.

\subsection{Prevalence} Prevalence is a notion of genericity that was originally motivated by the need to have a ``translation-invariant'' measure theoretical form of genericity in infinite dimensional vector spaces. The natural finite dimensional analogue of it could be the notion of ``Lebesgue almost every'' in $\R^d$. The ideas surrounding prevalence were introduced by Christensen in \cite{Chr72, Chr74}, and the name ``prevalence'' was suggested by Hunt, Sauer, and Yorke in \cite{HSY92}. The notion of prevalence was originally only defined for elements in a topological vector space, but in \cite{AZ01} Anderson and Zame also gave an analogous definition for convex subsets of topological vector spaces.

\begin{definition} Let $X$ be a topological vector space and let $C$ be a completely metrizable convex subset of $X$. We say that a set $E \subset C$ is \textit{shy in $C$ at the point $c \in C$} if for every $\delta \in (0,1)$, and open neighbourhood $U$ of the origin in $X$ there exists a Borel measure $\Lambda$ on $X$ with $\Lambda(X) > 0$ such that
\begin{itemize}
\item[(1)] $\spt \Lambda$ is compact, $\spt \Lambda \subset U+c$, and $\spt \Lambda \subset \delta C + (1-\delta)c$;
\item[(2)] $\Lambda(x+E) = 0$ for every $x \in X$.
\end{itemize}
If $E$ is shy in $C$ at every point $c \in C$, then we say $E$ is \textit{shy in $C$}. A property $\sf{P}$ of points in $x \in C$ is satisfied for \textit{prevalent} $x \in C$ if the set
$$\{x \in C : x \text{ does not satisfy } \sf{P}\}$$
is shy in $C$.
\end{definition}

In our case we could consider the set $\cP(K)$ of all Borel probability measures on $K$, where $K$ is some compact subset of $\R^d$, and $\overline{\cM}(K)$ the set of all \textit{signed} Borel measures on $K$. Then $\cP(K)$ is a completely metrizable convex subset of the topological vector space $\overline{\cM}(K)$. This setting was already considered by Olsen in \cite{Ols10} when the $L^q$-dimension of prevalent measures $\mu \in \cP(K)$ was studied. Moreover, in the case of trees $I^\N$, the set $\cP(I^\N)$ is a complete convex subset of $\overline{\cM}(I^\N)$, the set of signed Borel measures on $I^\N$.

\begin{problem}
\label{problemprevalence}
What are the tangent measures of prevalent measures in $\cP(K)$? What are the micromeasures and micromeasure distributions of prevalent measures in $\cP(I^\N)$?
\end{problem}

\medskip
\noindent {\bf Acknowledgment}. The author thanks the anonymous referee of \cite{OS12} for recommending to study the tangent measures of typical measures. Moreover, the author is grateful to Pertti Mattila and Tuomas Orponen for valuable discussions and comments, and especially to Tuomas for suggesting the idea for Proposition \ref{nontangent}.

\end{document}